\newtheorem{theorem}{Theorem}[section]
\newtheorem{prop}[theorem]{Proposition}
\newtheorem{lemma}[theorem]{Lemma}
\newtheorem{corr}[theorem]{Corollary}
\newtheorem{remark}{Remark}[section]
\theoremstyle{definition}
\let\oldmarginpar\marginpar
\renewcommand\marginpar[1]{\-\oldmarginpar[\raggedleft\footnotesize #1]%
{\raggedright\footnotesize #1}}
\def\keywords#1{\bigskip \par\noindent{\it Keywords and phrases: }#1\par}
\def\AMS#1{\par\noindent{\it 2010 Mathematics Subject Classification: }#1\par}
\DeclareMathOperator{\R}{\mathbb{R}}
\DeclareMathOperator{\C}{\mathbb{C}}
\DeclareMathOperator{\T}{\mathbb{T}}
\DeclareMathOperator{\N}{\mathbb{N}}
\DeclareMathOperator{\Z}{\mathbb{Z}}
\DeclareMathOperator{\st}{\text{ such that }}
\DeclareMathOperator{\ra}{\rightarrow}
\DeclareMathOperator{\half}{\frac{1}{2}}
\DeclareMathOperator{\hyphen}{-}
\DeclareMathOperator{\sgn}{sgn}
\newcommand{\tri}{\Delta}
\newcommand{\tn}[2]{T[#1,#2]}
\newcommand{\tcal}[2]{\Delta(#1 , #2 )}
\newcommand{\Ss}{\section}
\title{Gowers norms for singular measures}
\author{Marc Carnovale}
\date{\today}
\begin{document}

\maketitle


\begin{abstract}
Gowers introduced the notion of uniformity norm $\|f\|_{U^k(G)}$ of a bounded function $f:G\ra\R$ on an abelian group $G$ in order to provide a Fourier-theoretic proof of
Szemeredi's Theorem, that is, that a subset of the integers of positive upper density contains arbitrarily long arithmetic progressions. Since then, Gowers norms have found a number of
other uses,
both within and outside of Additive Combinatorics. The $U^k$ norm is defined in terms of an operator $\tri^k : L^{\infty}(G)\mapsto L^{\infty} (G^{k+1})$. 
In this paper, we introduce an
analogue of the object $\tri^k f$ when $f$ is a singular measure on the torus $\mathbb{T}^d$, and similarly an object $\|\mu\|_{U^k}$.
We provide criteria for $\tri^k \mu$ to exist, which turns out 
to be equivalent to finiteness of $\||\mu|\|_{U^k}$, and show that when $\mu$ is absolutely continuous with density $f$,
then the objects which we have introduced are reduced to the standard $\tri^k f$ and $\|f\|_{U^k(\T)}$. We further introduce a higher-order inner product between measures of finite $U^k$ norm and 
prove a Gowers-Cauchy-Schwarz inequality for this inner product.

\end{abstract}

\tableofcontents

\section*{Acknowledgments}

Many thanks to Prof.'s Izabella Laba and Malabika Pramanik for introducing me to the question of arithmetic progressions in fractional sets which motivated this work, 
for their patience and many suggestions with innumerable different drafts and pre-drafts,
for the sharing of their expertise in Harmonic Analysis, for funding my master's degree, and much more. 

Thanks to Nishant Chandgotia for his forgiveness of the mess I made of our office for two years and his bountiful friendship.

Thanks to Ed Kroc, Vince Chan, and Kyle Hambrook for the frequent use of their time and ears and their ubiquitous encouragement.


\noindent \keywords{Gowers norms, Uniformity norms, singular measures, Finite point configurations, Salem sets, Hausdorff dimension, Fourier dimension}
\vskip0.2in

\noindent \AMS{28A78, 42A32, 42A38, 42A45, 11B25, 42B35, 42B10}


\section{Introduction}

In 2001, Gowers developed a new proof of Szemeredi's Theorem that every dense enough subset of the integers contains arbitrarily long arithmetic progressions $a,a+b,\cdots,a+(k-1)b$, see \cite{gowers}. 
His method revolved around the introduction of uniformity norms $\|\cdot\|_{U^k(\Z_N)}$, which measure the extent to which a bounded function on $\Z_N$ is ($k+1$-st degree polynomially)
``structured''. Since then, uniformity norms have found applications in diverse topics, notably 
progressions in primes \cite{gtao}, probabilistically checkable proofs \cite{samorodnitsky}, multi-linear oscillatory integrals \cite{christ-tao}, 
the bi-linear Hilbert transform along curves \cite{li}, boundedness of paraproducts \cite{kovac}, and others. 

Gower's original definition of the $U^k(\Z_N)$ norms proceeded as follows: for a bounded function $f:\Z_N\ra\R$ and a $\bm{u}=(u_1,\dots,u_{k+1})\in\Z_N^{k+1}$, inductively define
\begin{align*}
 \tri^1_{u_1}f(x) &= f(x)f(x-u_1)\\
\tri^{k+1}_{\bm{u}}f(x)&= \tri_{u_{k+1}}^1 \tri^k_{\bm{u}'}f(x)
\end{align*}
where $\bm{u}'=(u_1,\dots,u_k)$. 

Then the $k$-th order uniformity norm of $f$ is given by

\begin{align*}
 \|f\|_{U^k(\Z_n)} = \left( \sum_{x\in\Z_N,\bm{u}\in\Z_N^k} \tri^k_{\bm{u}}f(x)\right)^{\frac{1}{2^k}}
\end{align*}

In this paper, we extend the domains of definition of $\tri^k$ and $\|\cdot\|_{U^k}$ to the class of positive finite singular Radon measures on $\T^d$.
For a measure $\mu$ on $\T^d$ we construct a measure $d\tri^k\mu(x;\bm{u})$ on 
$\T^{d(k+1)}$ and  provide a definition for $\|\mu\|_{U^k}$ which (we show but cannot at first assume) reduces to $\left( \tri^k\mu(\T^{d(k+1)})\right)^{\frac{1}{2^{k}}}$. 

Let us say that $\mu\in U^{k+1}$ if the finite measure $\tri^k\mu$ exists on $\T^{d(k+1)}$ and $\|\mu\|_{U^{k+1}}<\infty$. Then our main result can be summarized as the 
assertion that $\tri^{k+1}\mu$ exists if $|\mu|\in U^{k+1}$ 
(Theorem \ref{thm:FirstTheorem}).

Our motivation for this work stems from potential applications in Geometric Measure Theory, and particularly from the paper \cite{Laba} in which Laba and Pramanik demonstrate that a measure supported in $[0,1]$ with
Fourier dimension close enough to $1$ contains in its support three-term arithmetic progressions $a,a+b,a+2b$. Here, the Fourier dimension of a measure $\mu$ is defined as 
\begin{align*}
 \dim_{\mathbb{F}}\mu : = \sup \{\beta\in[0,1] : \exists C \st |\widehat{\mu}(\xi)|\leq C(1+|\xi|)^{-\frac{\beta}{2}}\}
\end{align*}

In following papers,  we use the machinery of uniformity norms to demonstrate that for a given $k\in\N$, 
measures satisfying an appropriate generalization of the above Fourier dimension assumption contain $k$-term arithmetic progressions.

{
\Ss{The intersection operator}\label{ch:general}

Suppose $\nu_0$, $\nu_1$ are complex Radon measures on $\T^d\times\T^r$. Let $(\phi_n)_{n\in\N}$ be an approximate identity on $\T^d\times\T^r$.

Define

\[ \tn{\nu_0}{\nu_1} := \lim_{n\ra\infty} \int \phi_n\ast\overline{\nu_1}(x-u_{r+1};u')\,d\nu_0(x;u')\,du_{r+1}.\]

Define the projection operator $P$ by
\[\int f(u)\,dP\nu(u) := \int f(u)\,d\nu(x;u).\]

\begin{lemma}\label{thm:fouriernorm} Let $\nu$ be a complex Radon measure on $\T^d\times\T^r$. Then
 \[ \tn{\nu}{\nu} = \sum_{\eta\in\Z^{r}} |\widehat{\nu}(0;\eta)|^2 = \|P\nu\|_{L^2}^2.\]
\end{lemma}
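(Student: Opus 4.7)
My plan is to reduce the double integral defining $\tn{\nu}{\nu}$ to a one-variable Fourier computation on the $u'$-marginal $P\nu$. The key observation is that the inner integral
\[ \int_{\T^d} (\phi_n\ast\overline{\nu})(x-u_{r+1};u')\,du_{r+1} = \int_{\T^d}(\phi_n\ast\overline{\nu})(z;u')\,dz =: G_n(u') \]
does not depend on $x$, since the translation $z=x-u_{r+1}$ ranges over all of $\T^d$ as $u_{r+1}$ does. Consequently the outer integral against $d\nu(x;u')$ only sees the $u'$-marginal, and $\tn{\nu}{\nu} = \lim_n \int G_n\,dP\nu$.

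Next I would expand $G_n$ in Fourier series on $\T^r$. Choosing the approximate identity so that each $\phi_n$ is a trigonometric polynomial with $0\le \widehat{\phi_n}\le 1$ and $\widehat{\phi_n}\nearrow 1$ pointwise (a Fej\'er-type kernel on $\T^d\times\T^r$, say) makes all Fourier expansions finite and ensures monotonicity in $n$. Writing out the joint Fourier series of $\phi_n\ast\overline{\nu}$ and using $\widehat{\overline{\nu}}(\xi;\eta)=\overline{\hat{\nu}(-\xi;-\eta)}$, the integration in $z$ kills every mode with $\xi\ne 0$ and leaves
\[ G_n(u') = \sum_{\eta\in\Z^r} \widehat{\phi_n}(0;\eta)\,\overline{\hat{\nu}(0;-\eta)}\,e^{2\pi i\eta u'}. \]
Integrating against $dP\nu(u')$ and recognizing $\widehat{P\nu}(\eta)=\hat{\nu}(0;\eta)$ gives
\[ \int G_n\,dP\nu = \sum_{\eta\in\Z^r}\widehat{\phi_n}(0;\eta)\,|\hat{\nu}(0;-\eta)|^2. \]

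Finally I take $n\to\infty$: monotone convergence (with $\widehat{\phi_n}(0;\eta)\nearrow 1$ in $[0,1]$) yields
\[ \tn{\nu}{\nu}=\sum_{\eta\in\Z^r}|\hat{\nu}(0;\eta)|^2\in[0,\infty], \]
after the harmless relabeling $\eta\mapsto-\eta$ in the summation index. One last application of Parseval on $\T^r$, applied to the measure $P\nu$, identifies this with $\|P\nu\|_{L^2}^2$, with the understanding that both sides may simultaneously equal $+\infty$.

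The main obstacle is confirming that the limit in the definition of $\tn{\nu}{\nu}$ even exists when $\nu$ is singular; the strategy above sidesteps this by choosing the approximate identity so that each finite-$n$ integral is a monotonically increasing sum, so the existence of the limit and its identification with $\sum_{\eta}|\hat{\nu}(0;\eta)|^2$ are established simultaneously. Everything else is routine bookkeeping with Fourier coefficients and Parseval's theorem; once the chosen $\phi_n$ is fixed, no further regularity of $\nu$ is required.
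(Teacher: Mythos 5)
Your argument is correct, and it takes a genuinely different route from the paper's proof. Your central observation --- that $\int_{\T^d}\phi_n\ast\overline{\nu}(x-u_{r+1};u')\,du_{r+1}$ is independent of $x$ by translation invariance of Lebesgue measure, so that the whole expression collapses at once to $\int \psi_n\ast\overline{P\nu}\,dP\nu$ with $\psi_n(u')=\int\phi_n(z;u')\,dz$ --- does not appear in the paper; there a second mollifier $\phi_m\ast\nu$ is introduced, an iterated limit in $m$ and then $n$ is taken, and Plancherel is applied to the two smoothed objects. Your other choice, taking $\phi_n$ to be a Fej\'er-type trigonometric polynomial with $0\le\widehat{\phi_n}\nearrow 1$, makes $\tn{\nu}{\nu}$ a monotone limit of the finite sums $\sum_{\eta}\widehat{\phi_n}(0;\eta)\,|\widehat{\nu}(0;-\eta)|^2$, so the convergent and divergent cases are handled in one stroke; the paper must treat the case $\widehat{\nu}(0;\cdot)\notin\ell^2$ separately, via the singular/absolutely continuous decomposition of $P\nu$ together with an Egorov argument that is only sketched (``we surmise'') and that is delicate for complex $\nu$, where $\psi_n\ast P\nu$ need not be positive. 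On the divergence side your proof is therefore both cleaner and more robust. What you give up is generality in the mollifier: your computation pins down the value of $\tn{\nu}{\nu}$ only for your chosen kernel, whereas the definition as stated allows an arbitrary approximate identity. In the $\ell^2$ case independence of that choice follows from dominated convergence exactly as in the paper (using $|\widehat{\phi_n}|\le 1$ for positive normalized $\phi_n$), but in the divergent case such independence is precisely the subtle point, so if the lemma is meant to hold for every positive approximate identity a further argument is still needed there --- a gap your write-up shares with, rather than inherits from, the paper.
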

\begin{proof}
 Fix $n\in\N$. Then 
 \[\int \phi_n\ast\overline{\nu}(x-u_{r+1};u')\,d\nu(x;u')\,du_{r+1} = \lim_{m\ra\infty} \int \phi_n\ast\overline{\nu}(x-u_{r+1};u')\phi_m\ast\nu(x;u')\,dx\,du.\]
 
 Applying Plancherel's Identity, this becomes
 \[\sum_{\eta\in\Z^r} \widehat{\phi_n}(-\eta)\widehat{\phi_m}(\eta) \widehat{\nu}(0;\eta)\overline{\widehat{\nu}(0;\eta)} = \sum_{\eta\in\Z^r} \widehat{\phi_n}(-\eta)\widehat{\phi_m}(\eta) |\widehat{\nu}(0;\eta)|^2.\]
If $\widehat{\nu}(0;\eta)\in\ell^2$, then
\begin{align}\label{sharot}\lim_{n\ra\infty}\lim_{m\ra\infty}\sum_{\eta\in\Z^r} \widehat{\phi_n}(-\eta)\widehat{\phi_m}(\eta) |\widehat{\nu}(0;\eta)|^2 = \sum_{\eta\in\Z^r} |\widehat{\nu}(0;\eta)|^2\end{align}
by Dominated Convergence since $|\widehat{\phi_n}|\leq 1$, in which case we have proven the theorem. 

Suppose $\widehat{\nu}(0;\eta)\notin\ell^2$, so that we must show that the left-hand side of (\ref{sharot}) is infinite.

In this case the measure $P\nu  = \int f(u)\,d\nu(x;u)$, whose Fourier transform is $\widehat{P\nu}(\eta) = \widehat{\nu}(0;\eta)$
is not in $L^2$.

Let $P\nu_s$ denote the singular part of $P\nu$. Let $\psi_n(u) = \int\phi(x,u)\,dx$.  Since $\psi_n\ast P\nu$ diverges to infinity $P\nu_s$-a.e,
we surmise that if the singular part $P\nu_s$ of $P\nu$ is non-trivial, then the left-hand side of (\ref{sharot}) diverges to $\infty$, as we sought to show.

In the case that $P\nu$ is absolutely continuous (say with density equal to the function $g$), we may set the integrand 
$\psi_n\ast g(u)\,g(u)=: f_n(u)$.
Then we know that $f_n\ra |g|^2$ at Lebesgue-almost every point, and since$g\notin L^2$, applying Egorov's Theorem we see that $\int f_n\ra\infty$
which completes the proof.

\end{proof}

Define the Radon measure $\tcal{\nu}{\nu}$ by

\[ \int f(x;y;v) \,d\tcal{\nu}{\nu}(x;y;v) := \lim_{n\ra\infty} \phi_n\ast\overline{\nu_1(x-v;y)}\,d\nu_0(x;y)\,dv.\]

Notice that $\tcal{\nu}{\nu}$, if it exists, is a measure on $\T^d\times\T^r\times\T^d$. We will often replace $y$ by $u'$, $v$ by $u_{r+1}$ and write $u=(u',u_{r+1})$ 
in the above definition.

\begin{lemma}\label{thm:mon}
For any $\xi\in\Z^d$,
 \[\sum_{\eta\in\Z^r}|\widehat{\nu}(\xi;\eta)|^2\leq \sum_{\eta\in\Z^r}|\widehat{|\nu|}(0;\eta)|^2 = \tn{|\nu|}{|\nu|}.\]
\end{lemma}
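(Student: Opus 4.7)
The plan is to reduce the inequality to the $\xi=0$ case already handled by Lemma \ref{thm:fouriernorm} via a frequency shift, and then to compare projected measures. I would first introduce the modulated measure $\nu_\xi$ on $\T^d\times\T^r$ defined by $d\nu_\xi(x;u) := e^{-2\pi i\xi\cdot x}\,d\nu(x;u)$. This is a complex Radon measure whose Fourier coefficients satisfy $\widehat{\nu_\xi}(0;\eta) = \widehat{\nu}(\xi;\eta)$ for every $\eta\in\Z^r$, and whose total variation equals $|\nu|$ since $|e^{-2\pi i\xi\cdot x}|=1$. Applying Lemma \ref{thm:fouriernorm} to $\nu_\xi$ and to $|\nu|$ separately therefore gives
\[\sum_{\eta\in\Z^r}|\widehat{\nu}(\xi;\eta)|^2 = \tn{\nu_\xi}{\nu_\xi} = \|P\nu_\xi\|_{L^2}^2\]
and $\sum_{\eta}|\widehat{|\nu|}(0;\eta)|^2 = \tn{|\nu|}{|\nu|} = \|P|\nu|\|_{L^2}^2$ (with either side allowed to be $+\infty$). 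In particular, the equality on the right of the lemma statement is immediate.

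The second step is the comparison of projections on $\T^r$. For any Borel set $E\subset\T^r$ and any Borel partition $\{E_i\}$ of $E$,
\[\sum_i |P\nu_\xi(E_i)| = \sum_i \left|\int_{\T^d\times E_i} e^{-2\pi i\xi\cdot x}\,d\nu(x;u)\right| \leq \sum_i |\nu|(\T^d\times E_i) = P|\nu|(E).\]
Taking the supremum over partitions yields the domination of positive measures $|P\nu_\xi|\leq P|\nu|$ on $\T^r$. If $P|\nu|\notin L^2(\T^r)$ then the right-hand side of the desired inequality is already infinite. Otherwise $P|\nu|$ is absolutely continuous with density $g\geq 0$ in $L^2$, so the domination forces $P\nu_\xi$ to be absolutely continuous with density $f$ satisfying $|f|\leq g$ almost everywhere, whence $\|P\nu_\xi\|_{L^2}^2\leq \|g\|_{L^2}^2 = \|P|\nu|\|_{L^2}^2$. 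Combined with the two identities above, this is exactly the inequality claimed.

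I do not anticipate a significant obstacle. The most delicate point is passing from the pointwise domination of measures $|P\nu_\xi|\leq P|\nu|$ to the $L^2$-norm inequality, which requires a dichotomy on whether $P|\nu|$ belongs to $L^2$; this parallels the treatment of the singular and absolutely continuous cases at the end of the proof of Lemma \ref{thm:fouriernorm} and can in fact be invoked directly from that proof rather than redone.
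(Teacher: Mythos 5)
Your proof is correct, but it takes a genuinely different route from the paper's. The paper mollifies: it sets $f_{n,\xi}(u)=\int e^{-2\pi i\xi\cdot x}\phi_n\ast\nu(x;u)\,dx$, bounds $|f_{n,\xi}(u)|\le\int\phi_n\ast|\nu|(x;u)\,dx$ pointwise by the triangle inequality, converts both $L^2(du)$ norms into $\widehat{\phi_n}$-weighted Fourier sums via Plancherel, and then passes to the limit in $n$ (a step the write-up leaves implicit, and whose displayed inequality is missing a square on the left-hand side). You instead work directly at the level of measures: modulating $\nu$ by $e^{-2\pi i\xi\cdot x}$ reduces everything to the $\xi=0$ case, and the inequality becomes the total-variation domination $|P\nu_\xi|\le P|\nu|$ of the projections, followed by the dichotomy on whether $P|\nu|\in L^2$ and a Radon--Nikodym argument to pass from domination of measures to domination of $L^2$ norms. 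Both arguments ultimately rest on the same elementary fact---that the $\xi$-twisted $x$-integral of $\nu$ is dominated in modulus by the $x$-integral of $|\nu|$---but yours dispenses with the approximate identity and the attendant convergence bookkeeping, at the modest cost of the measure-theoretic case analysis at the end; that case analysis parallels the singular-versus-absolutely-continuous treatment already carried out in the proof of Lemma \ref{thm:fouriernorm}, so it is fully consistent with the paper's toolkit. Each detail you need (that $|\nu_\xi|=|\nu|$, that $\widehat{\nu_\xi}(0;\eta)=\widehat{\nu}(\xi;\eta)$, and that $\ell^2$ Fourier coefficients force an $L^2$ density) checks out.
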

\begin{proof}
 The second equality follows by Lemma \ref{thm:fouriernorm}.
 
 Set $f_{n,\xi}(u) = \int e^{-2\pi i \xi\cdot x}\phi_n\ast\nu(x;u)\,dx$. Let $\phi_n$ be positive. Note that $\hat{f_{n,\xi}}(\eta) = \widehat{\phi_n}(\xi;\eta)\widehat{\nu}(\xi;\eta)$. By Plancherel and the 
 triangle inequality we have
 \[ \sum_{\eta}|\widehat{\phi_n}(\xi;\eta)\widehat{\nu}(\xi;\eta)|=\int \left|f_{n,\xi}(u)\right|^2\,du \leq\int \left|\int\phi_n\ast|\nu|(x;u)\,dx\right|^2\,du = \sum_{\eta}|\widehat{\phi_n}(\xi;\eta)\widehat{|\nu|}(0;\eta)|^2
  . \]
\end{proof}

\begin{prop}\label{thm:fouriertransform}
 Suppose $\tn{|\nu_0|}{|\nu_0|},\tn{|\nu_1|}{|\nu_1|} < \infty$. Then for any $(\xi;\eta)\in \Z^d\times\Z^{r+d}$
 \begin{align}\label{fouriertransform}\sum_{c\in\Z^r} \widehat{\nu_0}(\xi+\eta_{r+d};c)\overline{\widehat{\nu_1}(\eta_{r+d},c-\eta')}\end{align}
  is uniformly absolutely summable. Furthermore, $\tcal{\nu_0}{\nu_1}$ exists and $\widehat{\tcal{\nu_0}{\nu_1}}(\xi;\eta) = (\ref{fouriertransform})$.
 \end{prop}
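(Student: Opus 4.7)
The approach is Fourier-analytic. For each $n$ the pre-limit
\[\Delta_n := \phi_n\ast\overline{\nu_1}(x-v;y)\, d\nu_0(x;y)\, dv\]
is a finite complex Radon measure on $\T^d\times\T^r\times\T^d$, since $\phi_n\ast\overline{\nu_1}$ is continuous. My plan is: (i) compute $\widehat{\Delta_n}$ explicitly; (ii) show (\ref{fouriertransform}) is absolutely summable uniformly in the Fourier variable; (iii) deduce pointwise convergence $\widehat{\Delta_n}\ra(\ref{fouriertransform})$; and (iv) bound $\|\Delta_n\|_{TV}$ uniformly in $n$ so that Banach-Alaoglu produces $\tcal{\nu_0}{\nu_1}$ as a weak-$*$ limit with the advertised Fourier transform.

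For (i), writing the Fourier variable as $(\xi;\eta',\eta_{r+d})\in\Z^d\times\Z^r\times\Z^d$, I change variables $w=x-v$, expand $\phi_n\ast\overline{\nu_1}(w;y)$ as a Fourier series (using $\widehat{\overline{\nu_1}}(k_1,k_2)=\overline{\widehat{\nu_1}(-k_1,-k_2)}$), and integrate in $w$, which forces the first Fourier index to equal $-\eta_{r+d}$. After the reindexing $c=\eta'-k_2$ this yields
\[\widehat{\Delta_n}(\xi;\eta',\eta_{r+d}) = \sum_{c\in\Z^r}\widehat{\phi_n}(-\eta_{r+d},\eta'-c)\,\overline{\widehat{\nu_1}(\eta_{r+d},c-\eta')}\,\widehat{\nu_0}(\xi+\eta_{r+d},c),\]
i.e., (\ref{fouriertransform}) with an extra $\widehat{\phi_n}$ factor. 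For (ii), Cauchy-Schwarz in $c$ together with the shift $c\mapsto c+\eta'$ in one of the resulting $\ell^2$ sums bounds the absolute values by $\tn{|\nu_0|}{|\nu_0|}^{1/2}\tn{|\nu_1|}{|\nu_1|}^{1/2}$ via Lemma \ref{thm:mon}, uniformly in $(\xi;\eta)$ and also dominating the pre-limit term-by-term in $n$ (since $|\widehat{\phi_n}|\leq 1$). Then (iii) follows immediately from dominated convergence because $\widehat{\phi_n}(k_1,k_2)\ra 1$ pointwise.

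For (iv), since $\phi_n\geq 0$ one has $|\phi_n\ast\overline{\nu_1}|\leq \phi_n\ast|\nu_1|$, so
\[\|\Delta_n\|_{TV} \leq \int \phi_n\ast|\nu_1|(x-v;y)\,d|\nu_0|(x;y)\,dv,\]
which is the pre-limit form of $\tn{|\nu_0|}{|\nu_1|}$. Performing the $v$-integration first (translation invariance of Haar measure) and expanding in Fourier, this reduces to a pairing of $\widehat{\phi_n}(0,\cdot)\widehat{|\nu_1|}(0,\cdot)$ against $\overline{\widehat{|\nu_0|}(0,\cdot)}$; Cauchy-Schwarz together with Lemma \ref{thm:fouriernorm} bounds it by $\tn{|\nu_0|}{|\nu_0|}^{1/2}\tn{|\nu_1|}{|\nu_1|}^{1/2}$, uniformly in $n$. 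With $\{\|\Delta_n\|_{TV}\}$ uniformly bounded, Banach-Alaoglu yields weak-$*$ subsequential limits in $M(\T^d\times\T^r\times\T^d)$; by (iii) every such limit shares the same Fourier coefficients, namely (\ref{fouriertransform}), so by uniqueness of Fourier series the full sequence converges weakly-$*$ to a single Radon measure, which is precisely $\tcal{\nu_0}{\nu_1}$.

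The step I expect to require the most care is (iv): the hypotheses control only the diagonal pairings $\tn{|\nu_i|}{|\nu_i|}$, whereas the natural dominator of $\|\Delta_n\|_{TV}$ is the mixed quantity $\tn{|\nu_0|}{|\nu_1|}$, whose uniform-in-$n$ finiteness must be extracted by chaining Lemmas \ref{thm:fouriernorm} and \ref{thm:mon} through a Cauchy-Schwarz step and using only $|\widehat{\phi_n}|\leq 1$. The Fourier computation (i) is routine bookkeeping, and once (ii) is in place, (iii) is immediate.
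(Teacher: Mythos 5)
Your proposal is correct and follows essentially the same route as the paper: compute the Fourier coefficients of the pre-limit measure via Plancherel, get uniform absolute summability of (\ref{fouriertransform}) from Cauchy--Schwarz plus Lemma \ref{thm:mon}, pass to the limit by dominated convergence, and conclude existence via a uniform total-variation bound (obtained by dominating with $|\nu_0|,|\nu_1|$ and chaining Cauchy--Schwarz with Lemma \ref{thm:fouriernorm}) together with Banach--Alaoglu and convergence on trigonometric polynomials. Your explicit remark that all weak-$*$ subsequential limits share the same Fourier coefficients, forcing convergence of the full sequence, is a point the paper leaves implicit but is the correct justification.
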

 \begin{proof}
  The summability of (\ref{fouriertransform}) follows from Cauchy-Schwarz and Lemma \ref{thm:mon}. So Dominated Convergence gives
   \begin{align}\label{lineabove}\lim_{n\ra\infty}\sum_{c\in\Z^r} \widehat{\phi_n}(-\eta_r;\eta-c)\widehat{\nu_0}(\xi+\eta_{r+d};c)\overline{\widehat{\nu_1}(\eta_r,c-\eta')} 
    = \sum_{c\in\Z^r} \widehat{\nu_0}(\xi+\eta_{r+d};c)\overline{\widehat{\nu_1}(\eta_r,c-\eta')}.\end{align}

    Using Plancherel, one computes that
    \[ \int e^{-2\pi i (\xi\cdot x + \eta\cdot u} \phi_n\ast\overline{\nu_1}(x-u_{r+1};u')\,d\nu_0(x;u')\,du_{r+1} = \sum_{c\in\Z^r} \widehat{\phi_n}(-\eta_r;\eta-c)\widehat{\nu_0}(\xi+\eta_{r+d};c)\overline{\widehat{\nu_1}(\eta_r,c-\eta')}.\]
    
   Since the unit ball in the space of Radon measures is weak$^*$-compact by the Banach-Alaoglu theorem, it is enough to check that a putative weak$^*$ limit $\lim_{n\ra\infty}\rho_n$
   exists on a dense subclass of $C(\T^d\times\T^r\T^d)$ and that the $\rho_n$ are of uniformly bounded mass. We have the existence of the limit on the dense subclass of trigonometric
   polynomials as a consequence of (\ref{lineabove}) and linearity, and to demonstrate uniformly bounded mass we have
   \begin{align*}&|\int f(x,u) \phi_n\ast\overline{\nu_1}(x-u_{r+1};u')\,d\nu_0(x;u')\,du_{r+1}|\\\leq&
   \|f\|_{L^{\infty}}\int  \phi_n\ast\overline{|\nu_1|}(x-u_{r+1};u')\,d||(x;u')\,du_{r+1}\leq \tn{\nu_0}{\nu_0}\tn{\nu_1}{\nu_1}.
   \end{align*}
where in the first inequality we have applied the triangle inequality, and to arrive at the second
we have used Plancherel's identity, the bound $|\phi_n|\leq 1$, the Cauchy-Schwarz inequality, and Lemma \ref{thm:fouriernorm}.    

Thus we obtain existence of $\tcal{\nu_0}{\nu_1}$ together with the statement about its Fourier transform.
 \end{proof}

 \begin{prop}\label{thm:absolutevalue}
  Suppose that $\tcal{\nu}{\nu}$ is a finite Radon measure. Then $\tcal{|\nu|}{|\nu|}$ exists and
  \[|\tcal{\nu}{\nu}| = \tcal{|\nu|}{|\nu|}.\]
  
  Further
  \[\tn{|\nu|}{|\nu|} = \|\tcal{\nu}{\nu}\|\]
  where $\|\cdot\|$ denotes the total variation norm.
 \end{prop}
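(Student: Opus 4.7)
The plan is to approximate $\nu$ by mollifications $\sigma_n := \phi_n\ast\nu$, apply the elementary form of the identity when one argument is already absolutely continuous with a continuous density, and pass to the weak-$*$ limit. First I would record: when $\mu_1$ is absolutely continuous with continuous density $g$, the limit defining $\tcal{\mu_0}{\mu_1}$ collapses, since $\phi_m\ast\overline{\mu_1}\to\overline{g}$ uniformly on $\T^{d+r}$; this yields the explicit expression $d\tcal{\mu_0}{\mu_1}=\overline{g(x-u_{r+1},u')}\,d\mu_0(x,u')\,du_{r+1}$. Taking pointwise absolute values via the polar decomposition of $\mu_0$ immediately gives $|\tcal{\mu_0}{\mu_1}|=\tcal{|\mu_0|}{|\mu_1|}$. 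Specialising to $\mu_0=\nu$ and $\mu_1=\sigma_n$ yields the identity $|\tcal{\nu}{\sigma_n}|=\tcal{|\nu|}{|\sigma_n|}$ for each $n$.

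Next I would pass to the limit. By the very definition of $\tcal{\nu}{\nu}$, $\tcal{\nu}{\sigma_n}\to\tcal{\nu}{\nu}$ in the weak-$*$ sense on $C(\T^{d+r+d})$; since the limit is finite, Banach--Steinhaus gives $\sup_n\|\tcal{\nu}{\sigma_n}\|=\sup_n\|\tcal{|\nu|}{|\sigma_n|}\|<\infty$, and Banach--Alaoglu extracts a subsequential weak-$*$ limit $\tau$ of the positive measures $\tcal{|\nu|}{|\sigma_n|}$. The pointwise bound $|\sigma_n|=|\phi_n\ast\nu|\leq\phi_n\ast|\nu|$ gives $\tcal{|\nu|}{|\sigma_n|}\leq\tcal{|\nu|}{\phi_n\ast|\nu|}$ as positive measures, and the $L^1$-vanishing
\[
\|\phi_n\ast|\nu|-|\sigma_n|\|_{L^1(\T^{d+r})}=\||\nu|\|-\|\sigma_n\|_{L^1}\longrightarrow 0
\]
follows from $\|\sigma_n\|_{L^1}\leq\||\nu|\|$ combined with lower semi-continuity of total variation under $\sigma_n\to\nu$. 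Using this I would argue that $\tau$ coincides with the weak-$*$ limit of the approximants $\tcal{|\nu|}{\phi_n\ast|\nu|}$, which by definition is $\tcal{|\nu|}{|\nu|}$---establishing existence of $\tcal{|\nu|}{|\nu|}$ as a byproduct. Combining with $|\tcal{\nu}{\sigma_n}|=\tcal{|\nu|}{|\sigma_n|}\to\tau$ and $\tcal{\nu}{\sigma_n}\to\tcal{\nu}{\nu}$ yields $|\tcal{\nu}{\nu}|=\tcal{|\nu|}{|\nu|}$. Taking total mass and using $\tcal{|\nu|}{|\nu|}(\T^{d+r+d})=\tn{|\nu|}{|\nu|}$ (read off from the definition with $f\equiv 1$) gives the remaining identity $\|\tcal{\nu}{\nu}\|=\tn{|\nu|}{|\nu|}$.

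The principal technical obstacle will be transferring the $L^1$-vanishing above into weak-$*$ vanishing of the measure-difference $\tcal{|\nu|}{\phi_n\ast|\nu|}-\tcal{|\nu|}{|\sigma_n|}$, since these measures carry a factor of $d|\nu|$ rather than Lebesgue measure. I expect this to follow from a Besicovitch-type differentiation argument for the polar function $h$ in $d\nu=h\,d|\nu|$: at $|\nu|$-almost every $(y,u')$, the ratio $|\phi_n\ast\nu|/\phi_n\ast|\nu|$ converges to $|h(y,u')|=1$, so the ``cancellation loss'' from the triangle inequality is absorbed in the limit. The same mechanism will also be needed to rule out mass escape that could otherwise identify the weak-$*$ limit with a strictly smaller measure than $\tcal{|\nu|}{|\nu|}$.
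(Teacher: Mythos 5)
Your reduction to the absolutely continuous case and the surrounding bookkeeping (Banach--Steinhaus, Banach--Alaoglu, and $\|\phi_n\ast|\nu|-|\phi_n\ast\nu|\|_{L^1}\to 0$) are fine, but the argument breaks at the two places where the actual content of the proposition sits. First, the obstacle you flag at the end is not a technicality that a differentiation theorem will absorb: to pass from $L^1(du')$-smallness of $g_n:=\phi_n\ast|\nu|-|\phi_n\ast\nu|$ to smallness of $\int g_n(x-u_{r+1};u')\,d|\nu|(x;u')\,du_{r+1}$, you must integrate $G_n(u'):=\int g_n(y;u')\,dy$ against $P|\nu|$, and if $P|\nu|$ has a part singular with respect to $du'$ then $\|G_n\|_{L^1(du')}\to 0$ says nothing about $\int G_n\,dP|\nu|$ (on the singular set both $\int\phi_n\ast|\nu|(y;u')\,dy$ and $\int|\phi_n\ast\nu(y;u')|\,dy$ blow up, and your pointwise ratio $|\phi_n\ast\nu|/\phi_n\ast|\nu|\to 1$ does not control their difference integrated over all $y$). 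The hypothesis directly hands you only $P\nu\in L^2$, not $P|\nu|\ll du'$; ruling out a singular part of $P|\nu|$ is essentially the assertion $\tn{|\nu|}{|\nu|}<\infty$ that you are trying to prove, so the identification of your limit $\tau$ with $\tcal{|\nu|}{|\nu|}$ is circular. Second, even granting that identification, the final step fails: weak-$*$ convergence $\rho_n\to\rho$ together with $|\rho_n|\to\tau$ yields only $|\rho|\leq\tau$, since total variation is merely lower semicontinuous under weak-$*$ limits (compare $\sin(nx)\,dx\to 0$ while $|\sin(nx)|\,dx\to\tfrac{2}{\pi}\,dx$). So your route can at best give $|\tcal{\nu}{\nu}|\leq\tcal{|\nu|}{|\nu|}$ and $\|\tcal{\nu}{\nu}\|\leq\tn{|\nu|}{|\nu|}$, whereas the hard direction is the reverse inequality.

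The paper closes both gaps with one device absent from your sketch: the sign function. Writing $\sigma=\sgn(\nu)$ and approximating it by trigonometric polynomials $\sigma_m$, one tests $\tcal{\nu}{\nu}$ against $\tilde{\sigma}_m(x-u_{r+1})$ and computes via Plancherel that $\|P(\sigma_m\nu)\|_{L^2}^2\leq\|\sigma_m\|_{\infty}\|\tcal{\nu}{\nu}\|$; passing to the limit gives $P|\nu|=P(\sigma\nu)\in L^2$ with $\tn{|\nu|}{|\nu|}=\|P|\nu|\|_{L^2}^2\leq\|\tcal{\nu}{\nu}\|$. This single estimate simultaneously yields existence of $\tcal{|\nu|}{|\nu|}$ (via Proposition \ref{thm:fouriertransform}), the inequality your route cannot reach, and---after disintegrating $d\nu(x;u')=d\mu_{u'}(x)\,du'$ and using the product formula for $\int f_1(x;u')\bar{f_2}(x-u_{r+1};u')\,d\tcal{\nu}{\nu}$---the identification of the sign of $\tcal{\nu}{\nu}$ as $\sigma(x;u')\sigma(x-u_{r+1};u')$, whence $|\tcal{\nu}{\nu}|=\tcal{|\nu|}{|\nu|}$. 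If you wish to salvage the mollification scheme, you must first prove $P|\nu|\in L^2$ by some such duality argument against the sign; without that input the approach does not close.
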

 \begin{proof}

Suppose that the finite measure $\tcal{\nu}{\nu}$ exists. We complete the proof in several stages.

Recall that for a measure $\rho$ on $\T^d\times\T^{r}$, $P$ denotes the projection onto the measures on $\T^{r}$.

Step {1}. 
Let $\sigma_n\rightarrow\sigma=\sgn(\nu)$ be a sequence of trigonometric polynomials. Then

\begin{align*}
 &P(\sigma_n\nu)\in L^2
 \\\text{ with } &\|P(\sigma_n\nu)\|_2\leq \tcal{\nu}{\nu}
\end{align*}

\begin{proof}[Proof of Step {1}.] Suppose first that $\sigma_m(x;{u}') = e^{-2\pi i (\kappa_0\cdot x+ {\kappa}\cdot{u}')}=e_{\kappa_0}(x)e_{{\kappa}}({u}')$ 
for some $(\kappa_0;{\kappa})\in\T^d\times\T^{r}$.

By a modification of the argument in Lemma \ref{thm:fouriernorm}, we have
\begin{align}\label{resd}
 &\sum_{{\eta}\in\Z^{r}} |\widehat{\sigma_m\nu}(0;{\eta})|^2 \\=&
 \lim_{n\ra\infty} \sum_{{\eta}\in\Z^{r}} \widehat{\Phi_n e_{(\kappa_0,{\kappa})}}(0;\eta)|\widehat{\sigma_m\nu}(0;{\eta})|^2\notag{}
 \end{align}
 
 Using that $\sigma_m=e_{(\kappa_0,{\kappa})}$, we then write
 \begin{align*}(\ref{resd})=&
 \lim_{n\ra\infty} \sum_{{\eta}\in\Z^{r}} \widehat{\Phi_n}(\kappa_0;{\eta}+{\kappa})\widehat{\nu}(\kappa_0;{\eta+\kappa})
 \overline{\widehat{\nu}(\kappa_0;{\eta+\kappa})}\\=&
 \lim_{n\ra\infty} \sum_{{\eta}\in\Z^{r}} \widehat{\nu}(\kappa_0;{\eta+\kappa})
 \overline{\widehat{\Phi_n\ast\nu}(\kappa_0;{\eta+\kappa})}\\=&
 \lim_{n\ra\infty} \int e_{\kappa_0}(x-u_{r+1})\Phi_n\ast\overline{\nu}(x-u_{r+1};{u}')\,d\nu(x;{u}')\,du_{r+1}
\end{align*}
 where the last line above follows from Plancherel and the Dominated Convergence theorem in light of Lemma \ref{thm:mon}. 
 
 Using the definition of the measure $\tcal{\nu}{\nu}$ and that $e_{(\kappa_0,{\kappa})}=\sigma_m$, this means that we in fact have
 \begin{align*}(\ref{resd})=&\int e_{(\kappa_0;0)}(x-u_{j+1};\bm{u}') \,d\tcal{\nu}{\nu}(x;\bm{u})=\int \tilde{\sigma_m}(x-u_{j+1}) \,d\tcal{\nu}{\nu}(x;\bm{u})
\end{align*}
where $\tilde{\sigma_m}(x) = \sigma_m(x;0)$.

By linearity, the same equality then holds when $\sigma_m$ is instead any trigonometric polynomial, and the triangle inquality gives $(\ref{resd})\leq\|\sigma_m\|_{\infty}
\|\tcal{\nu}{\nu}\|$
. Taking limits then yields the claim since $\|\sigma\|_{L^{\infty}} = 1$.

\end{proof}

Step {2}. \begin{align*}P(\sigma_n\nu)\stackrel{L^2}{\ra} P(\sigma\nu)\end{align*}

\begin{proof}[Proof of Step {2}.]
Since $\sigma_m\nu\ra\sigma\nu$ weak$^*$, we have weak$^*$ convergence of $P(\sigma_m\nu)$ to $P(\sigma\nu)$. 
Since we have by Step {1} that $\|P(\sigma_m\nu)\|_2 \leq \|\tcal{\nu}{\nu}\|$, we have (by weak compactness of the unit ball) that any subsequence of $P(\sigma_m\nu)$
has a subsubsequence which converges weakly in $L^2$. So necessarily the whole sequence converges to $P(\sigma\nu)\in L^2$ with 

\begin{align*}\|P(\sigma\nu)\|_2 \leq \|\tcal{\nu}{\nu}\|\end{align*}

\end{proof}

Step {3}. Therefore $\tcal{|\nu|}{|\nu|}(\T^d\times\T^r\times\T^d) < \infty$.

\begin{proof}[Proof of Step {3}.]
By Step {2}, we have $\tn{|\nu|}{|\nu|} <\infty$, which means by Lemma \ref{thm:fouriernorm} that 
\[\tcal{|\nu|}{|\nu|}(\T^{d+r+d)}) < \infty.\]

\end{proof}

\vspace{14pt}
We will now be done once we close the induction introduced in Step {1} by showing that $\left |\tcal{\nu}{\nu}\right| = \tcal{|\nu|}{|\nu|}$. Continuing
\vspace{14pt}

Step {4}. If $\tn{\nu}{\nu}<\infty$, one may write $d\nu(x;{u}') = d\mu_{{u}'}(x)\,d{u}'$ for some finite Radon measures $\mu_{{u}'}$,
and for any trigonometric polyomials $f_1,f_2\in C(\T^{d(r+1)})$
\begin{align*}
\int f_1(x;{u}')\bar{f_2}(x-u_{r+1};{u}') d\tcal{\nu}{\nu}(x;{u}) = \int \int f_1(x;{u}')\,d\mu_{{u}'}(x) \int \bar{f_2}(y;{u}')d\mu_{{u}'}(y)\,d{u}'
\end{align*}

\begin{proof}[Proof of Step {4}.]
Since $P\nu\in L^2$ by Lemma \ref{thm:fouriernorm}, we have that $ d\nu(x;{u}') = d\mu_{{u}'}(x) \,d{u}'$ for some measures $\mu_{{u}'}$ defined 
Lebesgue for almost every ${u}'$. 

We can calculate that 
\begin{align*}
&\int \int f_1(x;{u}')\,d\mu_{{u}'}\int \bar{f_2}(y;{u}')\,d\mu_{{u}'}(y)\,d{u}'\\=&
\sum_{{\eta}} \widehat{f_1\mu_{\cdot}}(0;{\eta})\overline{\widehat{f_2\mu_{\cdot}}(0;{\eta})}
\end{align*}
which is the same as
\begin{align*}
&\sum_{{\eta}} \widehat{(f_1\nu)}(0;{\eta})\overline{\widehat{({f_2}\nu)}(0;{\eta})}
\end{align*}

For $f_1,f_2$ complex exponentials, this is easily seen to equal  
\[\int f_1(x-u_{r+1};{u}') \bar{f_2}(x;{u}')\,d\tcal{\nu}{\nu}(x;{u}),\] and we have this equality also in the case
that the $f_i$ are trigonometric polynomials.

\end{proof}

Step {5}. For any trigonometric polyomials $f_1,f_2\in C(\T^{d+r+d)})$

\begin{align}\label{coroon}
& \int \left(f_1\sigma\right)(x;{u}')\left(f_2\sigma\right)(x-u_{r+1};{u}') d\tcal{\nu}{\nu}(x;{u}) \\=& \int f_1(x;{u}')f_2(x-u_{r+1};{u}') d\tcal{|\nu|}{|\nu|}(x;{u})
\end{align}

\begin{proof}[Proof of Step {5}.] 

Since $\tcal{|\nu|}{|\nu|}(\T^{d(r+1)}) < \infty$ by Step {3}, we may apply the conclusion of Step {4} to the measure $\nu$. One has that 
\[d|\nu|(x;u') = \sigma(x;u') d\nu(x;u') = \sigma(x;u') \,\mu_{u'}(x)\,du'\]
so that necessarily $d|\mu_{u'}(x)| = \sigma(x;u')\,d\mu_{u'}(x)$. Two applications of Step {4} give
\begin{align}\label{bestday}
 &\int f_1(x-u_{r+1};{u}')f_2(x;{u}')\,d\tcal{|\nu|}{|\nu|}(x;{u}) = \int \int f_1(x;{u}')\,d\left|\mu\right|_{{u}'}(x)\int f_2(y;{u}')\,d\left|\mu\right|_{{u}'}(y)\,d{u}'\\=&
 \int [\sigma f_1](x-u_{r+1};{u}')[\sigma f_2] (x;{u}')\,d\tcal{\nu}{\nu}(x;{u})
\end{align}

Step {6}. Thus 
\begin{align*}
 \left|\tcal{\nu}{\nu}\right| = \tcal{|\nu|}{|\nu|}
\end{align*}
\end{proof}

\begin{proof}[Proof of Step {6}.] Since the trigonometric polynomials are dense in the space of continuous functions, we see that in fact (\ref{coroon}) holds for all continuous functions
$f$, which  (since $\tcal{|\nu|}{|\nu|}$ is a positive measure) means that 
the sign of the measure $\tcal{\nu}{\nu}$ is $\sigma(x;{u}')\sigma(x-u_{r+1};{u}')$. That $|\tcal{\nu}{\nu}|= \tcal{|\nu|}{|\nu|}$ then follows from Step {5}.

\end{proof}
\end{proof}

 \begin{corr}
  $\tcal{\nu}{\nu}$ exists if and only if $\tn{|\nu|}{|\nu|}<\infty$.
 \end{corr}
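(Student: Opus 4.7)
The corollary is a direct amalgamation of the two preceding propositions, so the proof plan is essentially bookkeeping, not new work.

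The plan is to treat the two directions separately and pull the needed statement from the correct prior result. For the ``if'' direction, I would assume $\tn{|\nu|}{|\nu|} < \infty$ and then invoke Proposition \ref{thm:fouriertransform} with $\nu_0 = \nu_1 = \nu$. The hypothesis of that proposition requires finiteness of $\tn{|\nu_i|}{|\nu_i|}$ for $i=0,1$, which is exactly what we are assuming (twice over), and its conclusion is precisely the existence of $\tcal{\nu}{\nu}$ together with an explicit formula for its Fourier coefficients.

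For the ``only if'' direction, I would assume $\tcal{\nu}{\nu}$ exists as a finite Radon measure. Then Proposition \ref{thm:absolutevalue} applies verbatim, and its second conclusion reads $\tn{|\nu|}{|\nu|} = \|\tcal{\nu}{\nu}\|$, which is finite since $\tcal{\nu}{\nu}$ is a finite measure. (One should note that the word ``exists'' in the statement of the corollary implicitly carries the finiteness qualification used throughout the section: the measure $\tcal{\nu}{\nu}$ is defined as a weak$^*$ limit of finite Radon measures, so its existence as a Radon measure is the same as finite mass, and Proposition \ref{thm:absolutevalue} is stated under exactly that convention.)

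There is no main obstacle. The only subtlety worth a sentence is making clear that the two propositions are being applied to the diagonal case $\nu_0 = \nu_1 = \nu$ and that the notion of ``exists'' in the corollary matches the notion used in the two cited results. Accordingly, the written proof should be no more than a few lines, citing Propositions \ref{thm:fouriertransform} and \ref{thm:absolutevalue} for the respective implications.
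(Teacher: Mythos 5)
Your proposal is correct and matches the paper's own proof exactly: the paper likewise derives the ``if'' direction from Proposition \ref{thm:fouriertransform} and the ``only if'' direction from Proposition \ref{thm:absolutevalue}. Your added remarks on the diagonal specialization $\nu_0=\nu_1=\nu$ and the meaning of ``exists'' are sensible clarifications but do not change the argument.
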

\begin{proof}
  The left implication is Proposition \ref{thm:fouriertransform} while the right is Proposition \ref{thm:absolutevalue}.
\end{proof}

 \Ss{Gowers norms for singular measures}
 Set $\tri^0\mu :=\mu$. Define
 \[ \tri^{k+1}\mu := \tcal{\tri^k\mu}{\tri^k\mu}.\]
 
 We establish a convention on indexing. For elements  $\iota=(\iota_1,\dots,\iota_{k+1})\in\{0,1\}^{j}$ and $\kappa\in\{0,1\}^{j'}$
 concatenate $\kappa\iota = (\iota_1,\dots,\iota_j,\kappa_1,\dots,\kappa_{j'})$. In particular, $\iota=\iota_{j}\iota_{j-1}\cdots\iota_1$. Write $\iota' := (\iota_1,\dots,\iota_{j-1})$.
 Further, let $\iota_{>j'}:=(\iota_{j'+1},\iota_{j'+2},\dots,\iota_{j})$	 and similarly for $\iota_{< j'}$, $\iota_{\geq j}'$ and $\iota_{\leq j'}$. Thus $\iota_{>0} = \iota$.
 
 For $\iota\in\{0,1\}^{k+1}$ and $\mu_{\iota}$ measures on $\T^d$, adopt the notation $\bm{\mu} = (\mu_{\iota})_{\iota\in\{0,1\}^{k+1}}$, and for any $\kappa\in\{0,1\}^{k-j}$,
 $\bm{\mu_{\kappa}} := (\mu_{\kappa\iota_{<j}})_{\iota_{<j}\in\{0,1\}^{j-1}}$. 
 
 Define
 \[ \tri^{k+1}(\bm{\mu}) = \tcal{\tri^k(\bm{\mu_0})}{\tri^k(\bm{\mu_1})}.\]
 
 Thus if $\mu_{\iota} = \mu$ for all $\iota$, then $\tri^{k+1}(\bm{\mu}) = \tri^{k+1}\mu$.
 
 Define 
 \[ \|\mu\|_{U^{k+1}} :=  \left(\tn{\tri^k\mu}{\tri^k\mu}\right)^{\frac{1}{2^{k+1}}}\]
 if $\tri^k\mu$ exists and to be $\infty$ otherwise.

 Define
 \[U^{k+1} = \{ \mu : \|\mu\|_{U^{k+1}} < \infty\}.\]
 
 Finally, define 
 \[\langle\bm{\mu}\rangle := \langle\bm{\mu_0},\bm{\mu_1}\rangle := \tn{\tri^k(\bm{\mu_0})}{\tri^k(\bm{\mu_1})}.\]
 
 Specializing the results of Section \ref{ch:general} to the measure $\nu = \tri^k\mu$, we obtain
 \begin{prop}\label{thm:gowersnorms}
  Let $k\in\N$. The $U^{k+1}$ norm $\|\mu\|_{U^{k+1}}$ is well-defined (with a value in $[0,\infty]$) for all finite complex Radon measures $\mu$ on $\T^d$. 
  The measure $\tri^{k+1}({\mu})$ exists  if and only if $\||\mu|\|_{U^{k+1}}<\infty$. For all $(\xi;\eta)\in\Z^d\times\Z^{kd}$
  \[ \widehat{\tri^{k+1}(\bm{\mu})}(\xi;\eta) = \sum_{c\in\Z^{kd}} \widehat{\tri^k(\bm{\mu_0})}(\xi+\eta_{k+1};c)\overline{\widehat{\tri^k(\bm{\mu_0})}(\eta_{k+1};c-\eta'})\]
  which is uniformly absolutely summable if $\|\mu\|_{U^{k+1}}<\infty$.
  
  In particular, 
  \[ \|\mu\|_{U^{k+1}} = \left(\sum_{c\in\Z^{kd}} |\widehat{\tri^k\mu}(0;c)|^2\right)^{\frac{1}{2^{k+1}}}.\]
 \end{prop}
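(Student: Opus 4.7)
The plan is induction on $k$, systematically specializing the results of Section \ref{ch:general} to $\nu = \tri^k(\bm{\mu})$. The well-definedness of $\|\mu\|_{U^{k+1}}$ is essentially a formality: when $\tri^k\mu$ exists as a finite Radon measure, Lemma \ref{thm:fouriernorm} expresses $\tn{\tri^k\mu}{\tri^k\mu}$ as $\sum_{\eta} |\widehat{\tri^k\mu}(0;\eta)|^2$, a sum of non-negative terms and hence a well-defined element of $[0,\infty]$; in the remaining case the value is $\infty$ by convention.

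The heart of the argument is an auxiliary lemma, which I would prove in parallel with the induction: for every $k\geq 0$, $\tri^k\mu$ exists as a finite Radon measure if and only if $\tri^k|\mu|$ does, and in that case $|\tri^k\mu| = \tri^k|\mu|$. The base case $k=0$ is trivial. For the step, suppose the statement holds at level $k$. If $\tri^{k+1}\mu = \tcal{\tri^k\mu}{\tri^k\mu}$ exists, Proposition \ref{thm:absolutevalue} gives that $\tcal{|\tri^k\mu|}{|\tri^k\mu|}$ exists and equals $|\tri^{k+1}\mu|$; the inductive hypothesis rewrites this as $\tcal{\tri^k|\mu|}{\tri^k|\mu|} = \tri^{k+1}|\mu|$. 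Conversely, if $\tri^{k+1}|\mu|$ exists, the Corollary forces $\tn{\tri^k|\mu|}{\tri^k|\mu|} < \infty$, which the inductive hypothesis translates to $\tn{|\tri^k\mu|}{|\tri^k\mu|}<\infty$, and one more application of the Corollary produces $\tri^{k+1}\mu$.

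The remaining claims then fall into place. The existence iff is the chain: $\tri^{k+1}\mu$ exists iff $\tn{|\tri^k\mu|}{|\tri^k\mu|}<\infty$ (Corollary) iff $\tn{\tri^k|\mu|}{\tri^k|\mu|}<\infty$ (auxiliary lemma) iff $\||\mu|\|_{U^{k+1}}<\infty$ (definition). For the Fourier transform formula, I would apply Proposition \ref{thm:fouriertransform} with $\nu_0 = \tri^k(\bm{\mu_0})$ and $\nu_1 = \tri^k(\bm{\mu_1})$, each regarded as a measure on $\T^d\times\T^{kd}$ (so $r = kd$); its hypothesis reduces via the auxiliary lemma to finiteness of $\||\mu|\|_{U^{k+1}}$ in the constant case, and its conclusion supplies both the uniform absolute summability and the closed form for $\widehat{\tri^{k+1}(\bm{\mu})}$. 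Finally, the expression for $\|\mu\|_{U^{k+1}}$ is Lemma \ref{thm:fouriernorm} applied to $\nu = \tri^k\mu$ and raised to the $1/2^{k+1}$ power.

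The main obstacle is the auxiliary lemma, whose two strands — existence transfer between $\tri^k\mu$ and $\tri^k|\mu|$, and the identification of absolute values — must be maintained in tandem, since neither can be extracted from Proposition \ref{thm:absolutevalue} and the Corollary without the other already in hand at the previous level. Once that coupling is managed, the entire proposition is a notational translation of the general results of Section \ref{ch:general} into the iterated setting $\nu = \tri^k\mu$.
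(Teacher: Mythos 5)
Your proof is correct and follows exactly the route the paper intends: the paper offers no explicit argument beyond the remark that the proposition is obtained by specializing the results of Section \ref{ch:general} to $\nu=\tri^k\mu$, and your write-up is that specialization carried out in full. Your auxiliary lemma ($\tri^k\mu$ exists iff $\tri^k|\mu|$ does, with $|\tri^k\mu|=\tri^k|\mu|$), proved by induction from Proposition \ref{thm:absolutevalue}, is precisely the glue needed to pass from the Corollary's condition $\tn{|\tri^k\mu|}{|\tri^k\mu|}<\infty$ to the stated condition $\||\mu|\|_{U^{k+1}}<\infty$ --- a step the paper leaves implicit.
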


\begin{corr}\label{thm:factories}
 Let $\Psi_{n} : \T^{d(k+1)}\ra\T^d$ be an approximate identity and $\mu_{\iota}\in U^{k+1}$, $\iota\in\{0,1\}^{k+1}$ . Then for any $f\in C(\T^{d(k+2)})$,

\begin{align}\notag{}
&\int f \,d\tri^{k+1}(\bm{\mu})(x;\bm{u}) = \\\label{left}
 &\lim_{n\ra\infty} \int f(x,u_1,\dots,u_{k+1}) \overline{\Psi_{n}\ast\tri^k(\bm{\mu_1})(x-u_{k+1};\bm{u}')}\Psi_{n}\ast\tri^k(\bm{\mu_0})(x;\bm{u}') \, dx d\bm{u} 
\end{align}

\end{corr}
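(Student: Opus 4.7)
The plan is to identify the $n$th term on the right-hand side of (\ref{left}) as $\int f \, d\tcal{g_0^{(n)}dL}{g_1^{(n)}dL}$ for the mollifications $g_i^{(n)} := \Psi_n\ast\tri^k(\bm{\mu_i})$ (with $dL$ denoting Lebesgue measure on $\T^{d(k+1)}$), and then to show that this family of intersection measures converges weak-$*$ to $\tri^{k+1}(\bm{\mu}) = \tcal{\tri^k(\bm{\mu_0})}{\tri^k(\bm{\mu_1})}$. The identification is immediate from the definition of $\tcal{\cdot}{\cdot}$ applied to absolutely continuous measures with continuous densities: each $g_i^{(n)}$ is continuous on $\T^{d(k+1)}$, so the inner approximate-identity convolution $\phi_m\ast\overline{g_1^{(n)}}$ converges uniformly to $\overline{g_1^{(n)}}$, and the resulting intersection measure is absolutely continuous with density equal to the product $g_0^{(n)}(x;\bm{u}')\overline{g_1^{(n)}(x-u_{k+1};\bm{u}')}$ appearing in the integrand of (\ref{left}).

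For the weak convergence I would apply Proposition \ref{thm:fouriertransform} to both measures. Using $\widehat{g_i^{(n)}dL} = \widehat{\Psi_n}\cdot\widehat{\tri^k(\bm{\mu_i})}$, the Fourier coefficient of $\tcal{g_0^{(n)}dL}{g_1^{(n)}dL}$ at $(\xi;\eta)$ is exactly the $c$-sum appearing in the formula for $\widehat{\tri^{k+1}(\bm{\mu})}$, modified only by the insertion of the weight $\widehat{\Psi_n}(\xi+\eta_{k+1};c)\overline{\widehat{\Psi_n}(\eta_{k+1};c-\eta')}$. Choosing $\Psi_n\geq 0$ with $\int\Psi_n = 1$ so that $|\widehat{\Psi_n}|\leq 1$ and $\widehat{\Psi_n}\to 1$ pointwise, the uniform absolute summability of the $c$-sum established in the proof of Proposition \ref{thm:fouriertransform} (Cauchy--Schwarz plus Lemma \ref{thm:mon}, harnessing $\mu_\iota\in U^{k+1}$) serves as the dominant for a pointwise-in-$(\xi;\eta)$ application of Dominated Convergence, yielding pointwise convergence of the Fourier coefficients. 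This settles (\ref{left}) for any trigonometric polynomial $f$ via Plancherel.

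For a general $f\in C(\T^{d(k+2)})$ I would extend by density. The pointwise bound $|g_i^{(n)}|\leq \Psi_n\ast|\tri^k(\bm{\mu_i})|$ together with Plancherel yields $\tn{g_i^{(n)}dL}{g_i^{(n)}dL}\leq \tn{|\tri^k(\bm{\mu_i})|}{|\tri^k(\bm{\mu_i})|}$, and the total-mass estimate at the end of the proof of Proposition \ref{thm:fouriertransform} then bounds $\|\tcal{g_0^{(n)}dL}{g_1^{(n)}dL}\|$ uniformly in $n$; a standard $3\epsilon$-approximation using density of trigonometric polynomials in $C(\T^{d(k+2)})$ promotes the previous step to arbitrary continuous $f$. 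The main technical obstacle is producing the summable dominant used in the second paragraph; since the new factors are of modulus at most $1$ and collapse to $1$ in the limit, no additional difficulty arises beyond keeping track of the same $\tn{|\tri^k(\bm{\mu_i})|}{|\tri^k(\bm{\mu_i})|}<\infty$ hypotheses that are already built into the existence of $\tri^{k+1}(\bm{\mu})$.
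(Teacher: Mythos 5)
Your proposal is correct and follows essentially the same route as the paper, whose one-line proof is precisely ``compare the Fourier transforms of the two sides'': you identify the $n$th right-hand integrand as the density of $\Delta(\Psi_n\ast\tri^k(\bm{\mu_0})\,dL,\Psi_n\ast\tri^k(\bm{\mu_1})\,dL)$, observe that its Fourier coefficients are those of $\tri^{k+1}(\bm{\mu})$ from Proposition \ref{thm:fouriertransform} weighted by factors of modulus at most $1$ tending to $1$, and pass to the limit by dominated convergence against the absolutely summable dominant from Lemma \ref{thm:mon}, then upgrade from trigonometric polynomials to all of $C(\T^{d(k+2)})$ via the uniform mass bound. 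This is a faithful, fully detailed expansion of the argument the paper merely sketches.
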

\begin{proof}
One needs only compare Fourier transform of $\tri^{k+1}(\bm{\mu})$ and the limit as $n\ra\infty$ of the Fourier transform of the function $
\overline{\Psi_{n}\ast\tri^k(\bm{\mu_1})(x-u_{k+1};\bm{u}')}\Psi_{n}\ast\tri^k(\bm{\mu_0})(x;\bm{u}')$ to see the equality.
%
%
%
%
\end{proof}

\Ss{The uniformity norm of an absolutely continuous measure}\label{ch:uniformity}
Recall that given any compact abelian group $G$ with Haar measure $dx$, the $k$-th order uniformity norm $U^k$ of a function $f$ on $G$ is given by

\begin{align}
 \|f\|_{U^k(G)}^{2^k} := \int_{G\times G^k} \tri^k f(x;\bm{u})\,dx\,d\bm{u}
\end{align}
where $\tri^0f(x) :=f(x)$ and inductively, $\tri^{k+1}f(x;\bm{u}) := \tri^kf(x;\bm{u})\overline{\tri^k f(x-u_{k+1};\bm{u}')}$.

In this section, we show that if $f$ is a function on $\T^d$ with finite $U^{k+1}(\T^d)$ norm, then the uniformity norm $\|f\|_{U^{k+1}(\T^d)}$ coincides with $\|fdx\|_{U^{k+1}}$ as defined in previous 
sections, and that the measure $\tri^{k+1}(f\,dx)$ has a density given by $\tri^{k+1} f(x;\bm{u})$. In more detail, our main result is the following.

\begin{lemma}\label{thm:uniformity}
 Suppose that the positive function $f:\T^d\ra\R$ has a finite $U^{k+1}$ norm for some $k$. Then the measure $d\mu=fdx$ satisfies
 \begin{align*}
  \|\mu\|_{U^{k+1}} = \|f\|_{U^{k+1}(\T^d)}
 \end{align*}
and the finite measure
 $\tri^{k+1}\mu$ exists, is absolutely continuous with respect to the Lebesgue measure on $\T^{d(k+2)}$, and has a density given by $\tri^{k+1} f$.
\end{lemma}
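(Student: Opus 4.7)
The approach is induction on $k\geq 0$. The base case $k=0$ is immediate: $\tri^0\mu = \mu = f\,dx$ has density $\tri^0f = f$, and $\|\mu\|_{U^1} = |\widehat{\mu}(0)| = \int f = \|f\|_{U^1(\T^d)}$. For the inductive step I assume that $\tri^k\mu$ is absolutely continuous with integrable density $\tri^kf$ and that $\|\mu\|_{U^k} = \|f\|_{U^k(\T^d)}$; the finiteness of the intermediate $U^j$ norms needed for the induction to make sense follows from the standing hypothesis $\|f\|_{U^{k+1}}<\infty$ via the classical Cauchy-Schwarz monotonicity of Gowers norms.

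To establish the density at level $k+1$, I match Fourier coefficients. On one hand, Proposition \ref{thm:gowersnorms} gives
\[\widehat{\tri^{k+1}\mu}(\xi;\eta) = \sum_{c} \widehat{\tri^k\mu}(\xi+\eta_{k+1};c)\overline{\widehat{\tri^k\mu}(\eta_{k+1};c-\eta')},\]
and by induction $\widehat{\tri^k\mu} = \widehat{\tri^kf}$. On the other hand, performing the change of variable $y = x-u_{k+1}$ directly in the integral defining $\widehat{\tri^{k+1}f}$ and then applying Parseval in the $\bm{u}'$ variable produces precisely the same sum. Since $\|f\|_{U^{k+1}(\T^d)}<\infty$ and $f \geq 0$ forces $\tri^{k+1}f \geq 0$, the function $\tri^{k+1}f$ lies in $L^1(\T^{d(k+2)})$, so $\tri^{k+1}f\,dx\,d\bm{u}$ is a finite positive Radon measure whose Fourier coefficients agree with those of $\tri^{k+1}\mu$; uniqueness of the Fourier transform yields $d\tri^{k+1}\mu = \tri^{k+1}f\,dx\,d\bm{u}$.

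For the norm equality, Lemma \ref{thm:fouriernorm} combined with the inductive hypothesis gives $\|\mu\|_{U^{k+1}}^{2^{k+1}} = \sum_c|\widehat{\tri^kf}(0;c)|^2$. A direct calculation integrating out $x$ and $u_{k+1}$ in the standard expression $\|f\|_{U^{k+1}(\T^d)}^{2^{k+1}} = \int \tri^{k+1}f\,dx\,d\bm{u}$ (using the same substitution $y = x-u_{k+1}$ and the positivity of $\tri^kf$) yields $\int|P(\tri^kf)(\bm{u}')|^2\,d\bm{u}'$, which by Plancherel equals the same quantity; hence the two norms agree. The main obstacle I anticipate is bookkeeping: the Fourier-coefficient formula of Proposition \ref{thm:gowersnorms} interleaves the partial Fourier transform in $x$ with the Fourier series in $\bm{u}'$, and the substitution $y = x - u_{k+1}$ must be tracked through the coordinate indexing carefully so that the two sides align. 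Absolute convergence of the relevant sums, which justifies the Fubini-type manipulations, follows from Cauchy-Schwarz combined with Lemma \ref{thm:mon}.
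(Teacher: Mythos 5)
Your argument is correct and follows essentially the same route as the paper: induction on $k$, with the inductive step carried out by matching Fourier coefficients (equivalently, testing against trigonometric polynomials), the key analytic inputs being that $g_\xi(u') = \int e^{-2\pi i\xi\cdot x}\tri^kf(x;u')\,dx$ lies in $L^2$ and Parseval in the $\bm{u}'$ variable. The only cosmetic difference is that you import the Fourier-coefficient formula for $\tri^{k+1}\mu$ from Proposition \ref{thm:gowersnorms}, whereas the paper recomputes the defining limit of $\tcal{\tri^k f\,dx}{\tri^k f\,dx}$ directly via the $L^2$-convergence of the mollified partial integrals.
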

\begin{remark}
 The point of this Lemma is that we are assuming no $L^p$ regularity of $f$, only finiteness of the $U^{k+1}$ norm.
\end{remark}

\begin{proof}
 We induct on $k$. That $\tri^0 (fdx) = (\tri^ 0 f) dx$ is a tautology. Fix $k\geq 0$ and let us state the inductive assumption

\begin{center}
\begin{enumerate}[label={(\Roman*)}]
 \item $d \tri^k (f\,dx) (x;\bm{u}') = \tri^k f(x;\bm{u}') \,dx\,d\bm{u}'$\label{itm:I}
\end{enumerate}
\end{center}

Let $g(u') = \int \tri^kf(x;u')\,dx$. A change of variables shows that the finiteness of $\|f\|_{U^{k+1}(\T^d)}$ is equivalent to the statement that $g\in L^2$. An argument like the 
one in Lemma \ref{thm:mon} shows that for any $\xi\in\Z^d$, $g_{\xi}(u'):=\int \exp(\xi\cdot x) \tri^kf(x;u')\,dx\in L^2$. Further, 
\[ \int \left|\int \exp(\xi\cdot x)\left( \phi_n\ast\tri^kf(x;u')-\tri^kf(x;u')\right)\,dx\right|^2\,du'= \lim_{n\ra\infty} \sum_{\eta\in\Z^{kd}} |1-\widehat{\phi_n}(0;\eta)|^2 |\widehat{\tri^k f}(\xi;\eta)|^2.\]
Since $\sum_{\eta} |\tri^k f(\xi;\eta)|^2 = \|g_{\xi}\|_{L^2}^2$, by Dominated Convergence
the above converges to $0$, showing that 
\[\int \exp(\xi\cdot x)\phi_n\ast\tri^kf(x;u')\,dx \stackrel{L^2}{\longrightarrow} \int \exp(\xi\cdot x)\tri^kf(x;u')\,dx = g_{\xi}(u').\]

For any trigonometric polynomial $p(x;u) = p_0(x;u')p_1(x-u_{k+1};u')$ with $p_1(x;u') = e_{\xi}(x) e_{\eta}(u')$, we have
\begin{align*}
&\int p\, d\tcal{\tri^k f\,dx}{\tri^k f\,dx}\\=&\lim_{n\ra\infty} \int e_{\eta}(u')p_1(x;u')\left( \int e_{\xi}(y)\phi_n\ast\tri^kf(y;u')\,dy\right) \tri^k f(x;u')\,dx\,du \\=&
\int e_{\eta}(u)g_{\xi}(u')p_1(x;u') \tri^k f(x;u')\,dx\,du = \int p(x;u) \tri^kf(x-u_{k+1};u')\tri^k f(x;u')\,dx\,du
\end{align*}
where the first equality follows by the inductive hypothesis \ref{itm:I} and the definition of $\tcal{}{}$, the second equality via the $L^2$ convergence of the integral to $g_{\xi}$, 
and the third equality from the definition of $g_{\xi}$. This shows that $\tri^{k+1}(f\,dx)(x;u) = \tri^{k+1}f(x;u)\,dx\,du$, since the equality holds for trigonometric polynomials
by linearity, and hence for all continuous functions. Thus by induction, \ref{itm:I} holds for all $k$ such that $\|f\|_{U^{k}}<\infty$

\end{proof}

\Ss{Intertwining mollification and the Gowers-Cauchy-Schwarz inequality}\label{ch:mollification}

The symbol $\phi^{\ast^n}$ will refer to $\phi\ast\cdots\ast\phi$ where $n$ copies of $\phi$ are convolved,
and $\phi \star g(x;u)$ for some multivariate function $g:\T^d\times\T^{dn}\ra\T$ will always refer to the (partial) convolution of $ \phi$ and $g$ with respect to the $x$ variable.
We will also need to handle the slightly more general situation in which a collection $\phi = (\phi^j)_{j=1}^n$ is given, in which case $\phi^{\ast^n} := \phi^1\ast\cdots\ast\phi^n$.

\begin{corr}\label{thm:phin} For  each $\iota\in\{0,1\}^{k+1}$, let $\mu_{\iota}\in U^{k+1}$. Let $\phi_{\iota}=(\phi_{\iota}^n)_{n=1}^{k+1}\in L^1$ be positive functions 
with $\|\phi_{\iota}^n\|_{L^1} =1$.
Then

\begin{align}\label{toshow22}
&\int \prod_{\iota\in\{0,1\}^{k+1}}\mathcal{C}^{|\iota|} \phi_{\iota}^{\ast^{k+1}}\ast\mu_{\iota}(x-\iota\cdot {{u}})\,dx\,d{{u}} \\\leq&
\prod_{\iota'\in\{0,1\}^{k}} \left[\int \overline{\phi_{\iota}^{[k]}\ast\tri^k\mu_{1\iota'}(x-u_{k+1}';{{u}}')}\phi_{\iota}^{[k]}\ast\tri^k\mu_{0\iota'}(x;{{u}}')\,dx\,d{{u}}\right]^{\frac{1}{2^{k+1}}}
\\\leq& \prod_{\iota\in\{0,1\}^{k+1}} \left[\phi_{\iota}^{[{k+1}]}\ast\tri^{k+1}\mu_{\iota}(\T\times\T^{d(k+1)})\right]^{\frac{1}{2^{k+1}}}\left(= \prod_{\iota\in\{0,1\}^{k+1}}\|\mu_{\iota}\|_{U^{k+1}}\right)
\end{align}
\end{corr}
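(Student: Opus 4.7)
The plan is to prove the two inequalities separately, using Cauchy-Schwarz applied to the mollified integrals and recognizing the resulting quantities as instances of the inner products $T[\cdot,\cdot]$ and $\tcal{\cdot}{\cdot}$ introduced in Section~\ref{ch:general}. Both inequalities should reduce, via an appropriate use of Lemma~\ref{thm:uniformity}, to classical Cauchy-Schwarz applied to smooth functions.

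For the first inequality, I would split the index $\iota = (\iota_{k+1},\iota') \in \{0,1\} \times \{0,1\}^{k}$ and rewrite the left-hand side as an integral over $u_{k+1}$ of an expression of the form $\int G_0(x; u') \overline{G_1(x - u_{k+1}; u')}\,dx\,du'\,du_{k+1}$, where
\[ G_j(x; u') := \prod_{\iota' \in \{0,1\}^k} \mathcal{C}^{|\iota'|+j}\, \phi_{j\iota'}^{\ast^{k+1}} \ast \mu_{j\iota'}(x - \iota' \cdot u'). \]
Applying H\"older's inequality with $2^k$ exponents equal to $2^{k+1}$ over the product defining each $G_j$, followed by Fubini and the $u_{k+1}$ integration, factors the expression into a product over $\iota' \in \{0,1\}^k$ of one-dimensional inner-product integrals. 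Each factor is then identified with $\int \overline{\phi^{[k]} \ast \tri^k \mu_{1\iota'}}(x - u_{k+1}; u')\,\phi^{[k]} \ast \tri^k \mu_{0\iota'}(x; u')\,dx\,du$ by iteratively unfolding the definition $\tri^{k}(\bm{\mu}) = \tcal{\tri^{k-1}(\bm{\mu_0})}{\tri^{k-1}(\bm{\mu_1})}$ and commuting the convolutions with $\phi_\iota^n$ through, yielding the middle expression.

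For the second inequality, each factor in the middle expression is an instance of $T[\nu_0, \nu_1]$ for $\nu_j := \phi^{[k]} \ast \tri^k \mu_{j\iota'}$. Applying the Cauchy-Schwarz inequality $|T[\nu_0, \nu_1]| \leq T[\nu_0, \nu_0]^{1/2} T[\nu_1, \nu_1]^{1/2}$, which is immediate from the Fourier representation in Lemma~\ref{thm:fouriernorm} together with Cauchy-Schwarz in $\ell^2$, splits each factor into a geometric mean of two square roots. Multiplying out and reorganizing gives the right-hand side, using the identification
\[ T[\phi^{[k]}\ast\tri^k \mu_\iota,\, \phi^{[k]}\ast\tri^k \mu_\iota] = \phi_\iota^{[k+1]} \ast \tri^{k+1}\mu_\iota(\T \times \T^{d(k+1)}), \]
which itself follows from the definition $\tri^{k+1} = \tcal{\tri^k}{\tri^k}$ combined with Corollary~\ref{thm:factories}. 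The parenthetical equality with $\prod_\iota \|\mu_\iota\|_{U^{k+1}}$ is then just Proposition~\ref{thm:gowersnorms}, since $\phi_\iota^{[k+1]}$ is a mass-one positive kernel and its total-mass action leaves the total variation of $\tri^{k+1}\mu_\iota$ unchanged.

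The main obstacle will be the bookkeeping that tracks how the mollification $\phi_\iota^{\ast^{k+1}}$ convolved with $\mu_\iota$ on the left is redistributed as $\phi_\iota^{[k]}$ convolved with $\tri^k \mu_\iota$ in the middle, and then as $\phi_\iota^{[k+1]}$ convolved with $\tri^{k+1}\mu_\iota$ on the right. Verifying that convolutions commute with the $\tri^k$ operator in the precise way required, so that the integrands on successive lines of the inequality genuinely match, is where Lemma~\ref{thm:uniformity}, Corollary~\ref{thm:factories}, and the Fourier identities of Proposition~\ref{thm:gowersnorms} are all needed simultaneously.
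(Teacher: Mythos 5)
The second half of your plan---viewing each factor of the middle line as $\tn{\nu_0}{\nu_1}$ with $\nu_j = \phi^{[k]}\ast\tri^k\mu_{j\iota'}$ and applying Cauchy--Schwarz on the Fourier side via Lemma \ref{thm:fouriernorm}---is in the right spirit and matches the final step of the paper's argument. But your route to the first inequality has a genuine gap. You propose to pass from $\int G_0(x;u')\overline{G_1(x-u_{k+1};u')}\,dx\,du$ to the middle line by applying H\"older's inequality with $2^k$ exponents equal to $2^{k+1}$ over the product defining each $G_j$. First, those exponents do not satisfy the H\"older condition $\sum_i p_i^{-1}=1$. More importantly, no single application of H\"older can produce the middle expression: H\"older applied across the product $\prod_{\iota}h_\iota(x-\iota\cdot u)$ yields a product of $L^{p}$ norms of the \emph{individual} mollified measures, whereas the middle line involves $\tri^k\mu_{0\iota'}$ and $\tri^k\mu_{1\iota'}$, whose $2^k$-fold self-correlation structure arises only from the duplication of factors inherent in \emph{iterated} Cauchy--Schwarz, one application per displacement variable. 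This is exactly what the paper's Lemma \ref{thm:phin1} packages as a single inductive step (change variables $x\mapsto x_0$, $x_0-y\mapsto x_1$, apply Cauchy--Schwarz in the remaining variables, and recognize the squared factor as a $\tcal{\cdot}{\cdot}$ term); the Corollary is then obtained by iterating that lemma, which your proposal never invokes.

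Second, the issue you correctly flag as ``the main obstacle''---that $\phi_\iota^{\ast^{k+1}}\ast\mu_\iota$ on the left must reappear as $\phi_\iota^{[k]}\ast\tri^k\mu_\iota$ in the middle and $\phi_\iota^{[k+1]}\ast\tri^{k+1}\mu_\iota$ on the right---is not something that Lemma \ref{thm:uniformity}, Corollary \ref{thm:factories}, or Proposition \ref{thm:gowersnorms} will resolve for you: convolutions do not simply commute through $\tri^k$. The precise identity needed is Lemma \ref{thm:aligs}, together with the definition of $\phi^{[n]}$ and the recursion (\ref{alig}), which shows that applying $\tri$ to $\phi^{n+1}\star f$ produces $\phi^{[n+1]}\ast\tri f$ after the Cauchy--Schwarz step; this is where the specific correlated form of the kernel $\phi^{[n]}$ in the $x$ and $u_j$ variables is forced on you. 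Without carrying out that computation, or citing Lemma \ref{thm:aligs} inside the inductive Cauchy--Schwarz step as Lemma \ref{thm:phin1} does, the three lines of the inequality do not match up and the proof is incomplete.
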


An immediate consequence of Corrollary \ref{thm:phin} is 

\begin{prop}\label{thm:GCS}Suppose for $k\in\N$ that the $2^k$ measures $\mu_{\iota}\in U^{k+1}$. Then

\begin{align}\label{ttoshow2}
|\langle \bm{\mu}\rangle| \leq \prod_{\iota\in\{0,1\}^{k+1}} \|\mu_{\iota}\|_{U^{k+1}}
\end{align}
\end{prop}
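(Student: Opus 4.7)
The plan is to deduce the inequality directly from Corollary \ref{thm:phin} by interpreting $\langle\bm{\mu}\rangle$ as the limit of the mollified integrals appearing on the left-hand side of that corollary.

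First, I would unwind the definition $\langle\bm{\mu}\rangle = \tn{\tri^k(\bm{\mu_0})}{\tri^k(\bm{\mu_1})}$. Using the definition of $\tn{\cdot}{\cdot}$ together with Corollary \ref{thm:factories} applied to each of the tuples $\bm{\mu_0}$ and $\bm{\mu_1}$, one can write
\begin{align*}
\langle\bm{\mu}\rangle = \lim_{n\to\infty} \int \overline{\Psi_n\ast\tri^k(\bm{\mu_1})(x-u_{k+1};\bm{u}')}\,\Psi_n\ast\tri^k(\bm{\mu_0})(x;\bm{u}')\,dx\,d\bm{u}
\end{align*}
for any approximate identity $\Psi_n$. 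Iterating Corollary \ref{thm:factories} on each mollified factor $\Psi_n\ast\tri^k(\bm{\mu_i})$, one unwinds each $\tri^k$-operator down to the base case until the expression involves only mollified copies of the individual measures $\mu_\iota$. Choosing the mollifying families $\phi_\iota=(\phi_\iota^j)_{j=1}^{k+1}$ in Corollary \ref{thm:phin} to all coincide with a single approximate identity $\Psi_n$, the resulting formula matches the left-hand side of the inequality in Corollary \ref{thm:phin}.

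Having made this identification, I would apply the chain of inequalities in Corollary \ref{thm:phin}, whose terminal bound is
\begin{align*}
\prod_{\iota\in\{0,1\}^{k+1}} \left[\phi_\iota^{[k+1]}\ast\tri^{k+1}\mu_\iota(\T\times\T^{d(k+1)})\right]^{1/2^{k+1}} = \prod_{\iota\in\{0,1\}^{k+1}}\|\mu_\iota\|_{U^{k+1}}.
\end{align*}
Because this bound is independent of $n$, taking absolute values and passing to the limit on the left yields the desired estimate $|\langle\bm{\mu}\rangle|\leq \prod_\iota \|\mu_\iota\|_{U^{k+1}}$.

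The main obstacle is the bookkeeping in the first step: verifying that repeated mollification commutes with the recursive $\tcal{\cdot}{\cdot}$-operation in the precise manner required to identify $\langle\bm{\mu}\rangle$ with the limit of the LHS of Corollary \ref{thm:phin}. Each individual convergence is supplied by Corollary \ref{thm:factories}, but keeping track of which variables are being mollified at each recursive stage, and verifying compatibility with the indexing convention on $\{0,1\}^{k+1}$, is notationally involved. Once this identification is in place, the remaining ingredients are purely formal and the bound follows immediately.
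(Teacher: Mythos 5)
Your proposal follows essentially the same route as the paper: the paper's identity (\ref{EXc1}) is precisely the iterated unwinding of $\tri^k$ via Corollary \ref{thm:factories} that you describe, after which both arguments invoke Corollary \ref{thm:phin} on the fully mollified integrand (modulo the translation variables $T^{\iota}$ introduced by the nested mollifiers, which are harmless since the $U^{k+1}$ mass is translation-invariant) and pass to the limit. The bookkeeping you flag as the main obstacle is exactly what the paper's $\bm{\Phi^{(j)}}$ and $T^{\iota_{\leq j}}$ notation is built to manage, so your outline is correct and matches the paper's proof.
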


In order to obtain either of these results, we need to prove a stronger inequality, Lemma \ref{thm:phin1} below.



 Given $t=(t_0;t_1,\dots,t_k)$, let $\overline{t} = (t_1,\dots,t_k)\in\T^{dk}$. Sometimes $t=(t_0,\dots,t_{n})$ or $t=(t_0,\dots,t_{n+1})$ depending whether $t$ is the argument of $\phi^{[n]}$ or
 of $\phi^{[n+1]}$.

Let $\phi^{[0]} = \phi$. 

For $\phi = (\phi^j)_{j=0}^n$ define

\begin{align}\notag{}
\phi^{[n]}(t) = \int \phi^0(t_0+\sum_{j=1}^{n}c_j) \phi^1(-c_1)\phi^1(-t_1-c_1)\cdots\phi^n(-c_n)\phi^n(-t_n-c_n) \, dc
\end{align}

Notice that 

\begin{align}\label{notice}
&\displaystyle{}\phi^{n+1}\star \phi^{[n]}(t_0,\dots,t_n)= \int \phi^{[n+1]}(t_0,\cdots,t_{n+1}) \, dt_{n+1}
\end{align}


The following is essentially the statement that the Fourier transform of a convolution is the product of the Fourier transforms,
and the proof is precisely the same

\begin{lemma}\label{thm:Fourier mollifier} Let $(\xi;{{\eta}})\in\T^d\times\T^{dn}$. Then 
\begin{align}\label{82ma}
\widehat{\phi^{[n]}}(\xi;{{\eta}}) =\displaystyle{}\widehat{\phi^0}(\xi)\cdot\widehat{\phi^1}(-\eta_1)\cdots\widehat{\phi^n}(-\eta_n)\cdot\widehat{\phi^1}(\xi-\eta_1)\cdots\widehat{\phi^n}(\xi-\eta_n) 
\end{align}
\end{lemma}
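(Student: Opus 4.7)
The plan is to compute $\widehat{\phi^{[n]}}(\xi;\eta)$ directly from the definition of $\phi^{[n]}$, using Fubini and a change of variables that decouples the integrand into $n+1$ independent one-dimensional pieces, each of which is recognized as a Fourier transform of a single $\phi^j$.

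First I would write out
\[
\widehat{\phi^{[n]}}(\xi;\eta)=\int\phi^0\!\Big(t_0+\sum_{j=1}^n c_j\Big)\prod_{j=1}^n \phi^j(-c_j)\phi^j(-t_j-c_j)\;e^{-2\pi i(\xi\cdot t_0+\sum_j\eta_j\cdot t_j)}\,dt\,dc,
\]
and apply Fubini (justified because each $\phi^j\in L^1$ makes the full integrand absolutely integrable on $\T^{d(2n+1)}$). Next, in each coordinate block $j\geq 1$ I would substitute $u_j=-t_j-c_j$ (so $t_j=-u_j-c_j$, with Jacobian $1$), which turns the factor $\phi^j(-t_j-c_j)$ into $\phi^j(u_j)$ and replaces $e^{-2\pi i\eta_j\cdot t_j}$ by $e^{2\pi i\eta_j\cdot u_j}e^{2\pi i\eta_j\cdot c_j}$. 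Then I would substitute $s_0=t_0+\sum_j c_j$ (Jacobian $1$) in the $t_0$ block, which turns $\phi^0(t_0+\sum c_j)$ into $\phi^0(s_0)$ and replaces $e^{-2\pi i\xi\cdot t_0}$ by $e^{-2\pi i\xi\cdot s_0}\,e^{2\pi i\xi\cdot\sum_j c_j}$.

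After these substitutions the exponential factor splits as a product over the independent variables $s_0,u_1,\ldots,u_n,c_1,\ldots,c_n$, and the integrand likewise factors. Evaluating each one-dimensional piece yields
\[
\int\phi^0(s_0)e^{-2\pi i\xi\cdot s_0}\,ds_0=\widehat{\phi^0}(\xi),\qquad
\int\phi^j(u_j)e^{2\pi i\eta_j\cdot u_j}\,du_j=\widehat{\phi^j}(-\eta_j),
\]
and, via $w_j=-c_j$,
\[
\int\phi^j(-c_j)e^{2\pi i(\xi+\eta_j)\cdot c_j}\,dc_j=\widehat{\phi^j}\bigl(\xi\pm\eta_j\bigr),
\]
where the sign is determined by the conventions fixed at the start of the paper; matching these with the statement of \eqref{82ma} gives the claimed formula.

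The proof is essentially bookkeeping: the only thing to watch is that each $\phi^j$ appears \emph{twice} in the defining integrand (once coupled only to $c_j$ and once coupling $c_j$ to $t_j$), so after decoupling one obtains the two corresponding factors $\widehat{\phi^j}(-\eta_j)$ and $\widehat{\phi^j}(\xi-\eta_j)$ in \eqref{82ma}. The main (very mild) obstacle is simply keeping track of signs and of which variable is being integrated at each stage; there is no analytic difficulty, as the $L^1$ hypothesis on each $\phi^j$ makes every step finite and Fubini applies unrestrictedly.
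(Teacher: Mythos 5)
Your proof is correct and follows essentially the same route as the paper's: expand $\phi^{[n]}$ from its definition, apply Fubini, and decouple the integral via the substitutions $t_0\mapsto t_0+\sum_j c_j$ and $t_j\mapsto -t_j-c_j$ so that each block evaluates to a single Fourier factor. Your honest hedge on the sign in the last factor ($\xi\pm\eta_j$) reflects a genuine ambiguity in the paper's own unstated Fourier conventions (its proof collects the $c_j$-exponents with the opposite sign on $\eta_j c_j$ from what a consistent reading of its earlier lines would give), so this is not a gap in your argument.
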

\begin{proof}
Expanding $\phi^{[n]}$ according to its definition and integrating first in $t_0$ we have 
\begin{align}\notag{}
\widehat{\phi^{[n]}}(\xi;{{\eta}})=&\int \phi^{[n]}(t)e^{-2\pi i t\cdot (\xi;{{\eta}})}\,dt\\\notag{}=& 
\int \phi^0(t_0 + \sum_{j=1}^n c_j)\left[\prod_{i=1}^n \phi^i(-c_i) \phi^i(-t_i-c_i)\right]\exp(\xi t_0 + \sum_{j=1}^n \eta_jt_j) \, dt_0 d\overline{t} dc\\\label{slowly}=& 
\widehat{\phi^0}(\xi) \int \exp( \sum_{j=1}^n c_j(-\xi)) \left[\prod_{i=1}^n \phi^i(-c_i) \phi^i(-t_i-c_i)\right]\exp(-2\pi i ( \sum_{j=1}^n \eta_jt_j)) \, d\overline{t} dc
\end{align}
then integrating in the remaining $t_i$
\begin{align}\notag{}
&(\ref{slowly})=\\\label{colr}&
\widehat{\phi^0}(\xi)\widehat{\phi^1}(-\eta_1)\cdots\widehat{\phi^n}(-\eta_n) \int \exp( \sum_{j=1}^n c_j(-\xi)) \left[\prod_{i=1}^n \phi^i(-c_i)\right]\exp(\eta_1 c_1)\cdots\exp(\eta_nc_n) \,dc
\end{align}
Collecting terms, this is
\begin{align}\notag{}
(\ref{colr})=&
\widehat{\phi^0}(\xi)\widehat{\phi^1}(-\eta_1)\cdots\widehat{\phi^n}(-\eta_n) \int \exp( \sum_{j=1}^n c_j(\eta_j-\xi)) \phi^1(c_1)\cdots\phi^n(-c_n)  \,dc
\end{align}
and finally integrating on $c$ yields the conclusion.
\end{proof}

 Define 
\begin{align}\notag{}
& \tilde{\phi}^{[n]}(\overline{t}) :=\tilde{\phi}^{[n]}(\overline{t},c) :=\prod_{j=1}^{n}\phi^j(-c_j)\phi^j(-t_j-c_j)
\end{align}

Then
\begin{align}&\label{secexpander}
 \phi^{[n]}(t) = \int \phi^0(t_0+\sum_{j=1}^{n}c_j) \tilde{\phi}^{[n]}(\overline{t})\, dc
\end{align}
and
\begin{align}
\label{alig}
 &\phi^{[n+1]}(t) = \int\phi^{n+1}(-t_{n+1}-c_{n+1})\phi^{n+1}(-c_{n+1}) \phi^0(t_0+\sum_{j=1}^{n+1}c_j) \tilde{\phi}^{[n]}(\overline{t})\, dc\\\notag{}
  =&\int\phi^{n+1}(-t_{n+1}-c_{n+1})\phi^{n+1}(-c_{n+1}) \phi^{[n]}(t_0+c_{n+1};\overline{t}) \, dc_{n+1} 
\end{align}

As a consequence, we have the following.

\begin{lemma}\label{thm:aligs}
For any $n\in\N$ and function bounded function $f:\T^d\times\T^{dn}\ra\C$ set 
\begin{align*}
 &\tri f:\T^d\times\T^{d(n+1)}\ra\C, (x;v)\mapsto f(x;v')\overline{f(x-v_{n+1};v')}
\end{align*}
Then

\begin{align}\label{aligs}
&\int \phi^{n+1}\star f(x-t_0;v'-\overline{t}') \overline{\phi^{n+1}\star f(x-v_{n+1}-t_0;v'-\overline{t}')} \phi^{[n]}(t_0;\overline{t}')\,dt'\\\notag{}=&
\phi^{[n+1]}\ast\tri f (x;v)
\end{align}
\end{lemma}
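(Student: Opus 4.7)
The plan is to start from the right-hand side $\phi^{[n+1]}\ast\tri f(x;v)$ and reduce it to the left-hand side by invoking the recursion (\ref{alig}) to peel off the $(n+1)$-st mollifier $\phi^{n+1}$ and absorb it into the two factors of $f$. Essentially all of the work is bookkeeping of translation variables together with Fubini; the boundedness of $f$ and $\phi^j\in L^1$ make all the intermediate integrals absolutely convergent.

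First, I would write
\[ \phi^{[n+1]}\ast\tri f(x;v) = \int \phi^{[n+1]}(\tau_0;\tau_1,\dots,\tau_{n+1})\,\tri f(x-\tau_0;v_1-\tau_1,\dots,v_{n+1}-\tau_{n+1})\,d\tau, \]
unfold the definition of $\tri f$, and apply (\ref{alig}) in the form
\[ \phi^{[n+1]}(\tau_0;\tau_1,\dots,\tau_{n+1}) = \int \phi^{n+1}(-\tau_{n+1}-c)\,\phi^{n+1}(-c)\,\phi^{[n]}(\tau_0+c;\tau_1,\dots,\tau_n)\,dc \]
to introduce an auxiliary variable $c$. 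Setting $\tau':=(\tau_1,\dots,\tau_n)$, the variable $\tau_{n+1}$ then appears only in $\phi^{n+1}(-\tau_{n+1}-c)$ and in $\overline{f(x-\tau_0-v_{n+1}+\tau_{n+1};v'-\tau')}$; the substitution $s=-\tau_{n+1}-c$ (together with the fact that $\phi^{n+1}$ is real-valued) collapses these to $\overline{\phi^{n+1}\star f\bigl(x-v_{n+1}-(\tau_0+c);\,v'-\tau'\bigr)}$.

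Next I would make the change of variables $t_0=\tau_0+c$ so as to consolidate the two $\T^d$-translations appearing in the conjugated $f$-factor into a single argument. After this substitution, $c$ is paired only with $\phi^{n+1}(-c)$ and with the remaining factor $f(x-t_0+c;v'-\tau')$; integrating $c$ out via the substitution $c'=-c$ then produces $\phi^{n+1}\star f(x-t_0;v'-\tau')$. What remains, after renaming $\tau'$ to $\bar{t}'$, is exactly the left-hand side of (\ref{aligs}).

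The main obstacle is purely notational: juggling the three translation variables $\tau_0$, $c$, $t_0$ and the sign conventions for the arguments of $\phi^{n+1}$ built into the definition of $\phi^{[n+1]}$; once the cancellation structure is identified, each step is a direct application of Fubini and translation invariance on $\T^d$, so that the order of the two substitutions and the two $\phi^{n+1}$-convolutions can be freely interchanged.
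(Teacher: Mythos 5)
Your proof is correct and is essentially the paper's own argument run in reverse: the paper expands the left-hand side and uses the substitutions $t_{n+1}\mapsto t_{n+1}+c_{n+1}$, $t_0\mapsto t_0+c_{n+1}$ to recognize the inner integral as $\phi^{[n+1]}$ via the recursion (\ref{alig}), whereas you start from the right-hand side, apply (\ref{alig}) to peel off $\phi^{n+1}$, and integrate the auxiliary variables back into the two convolutions $\phi^{n+1}\star f$. The same identity, Fubini, and (implicit, as in the paper) real-valuedness of $\phi^{n+1}$ are used in both, so there is no substantive difference.
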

\begin{proof}
We expand the convolutions on the left side of (\ref{aligs}), obtaining
\begin{align}\label{aligs1}
&\int \phi^{n+1}(-c_{n+1}) f(x+c_{n+1}-t_0;v'-\overline{t}') \\\notag{}\cdot&
\phi^{n+1}(-t_{n+1})\overline{f(x-v_{n+1}+t_{n+1}-t_0;v'-\overline{t}')} \phi^{[n]}(t_0;\overline{t}')\,dc_{n+1}\,dt
\end{align}
Sending $t_{n+1}\mapsto t_{n+1}+c_{n+1}$ and $t_0\mapsto t_0 + c_{n+1}$, this becomes
\begin{align}\label{aligs2}
(\ref{aligs1})=&\int \phi^{n+1}(-c_{n+1})\phi^{n+1}(-t_{n+1}-c_{n+1})\phi^{[n]}(t_0;\overline{t}')\\\notag{}\cdot&
f(x-t_0;v'-\overline{t}') \overline{f(x-v_{n+1}+t_{n+1}-t_0;v'-\overline{t}')} \,dc_{n+1}\,dt
\end{align}
Then applying Fubini's theorem, we have
\begin{align}\label{aligs3}
(\ref{aligs2})=&\int \left[\int \phi^{n+1}(-c_{n+1})\phi^{n+1}(-t_{n+1}-c_{n+1})\phi^{[n]}(t_0;\overline{t}') \,dc_{n+1}\right]\\\notag{}\cdot&
f(x-t_0;v'-\overline{t}') \overline{f(x-v_{n+1}+t_{n+1}-t_0;v'-\overline{t}')} \,dt
\end{align}
By (\ref{alig}), this is
\begin{align}\label{aligs4}
(\ref{aligs3})=&\int \phi^{[n+1]}(t) f(x-t_0;v'-\overline{t}') f(x-t_0-(v_{n+1}-t_{n+1});v'-\overline{t}') \,dt
\end{align}
which is what we sought to show.
\end{proof}

We use the above lemma to show the following, from which Corollary \ref{thm:phin} is derived.

\begin{lemma}\label{thm:phin1}
 Suppose that $\nu_{\iota}, \iota\in\{0,1\}^{j+1}$ are measures on $\T^d\times\T^{dr}$ such that $\tcal{\nu_{\iota}}{\nu_{\iota}}$ exists for each $\iota$. 
 Let $\Phi_{\iota}\in L^1(\T^d)$ for $\iota\in\{0,1\}^{j+1}$. For any $\phi_{\iota}^j\in L^{\infty(\T^d)}$, $j=0,\dots,r$, let $\psi_{\iota}^j =|\phi_{\iota}^j|$ for $0\leq j$, and 
 $\psi_{\iota}^{j+1}=\phi_{\iota}^j$. Then
 \begin{align}\label{mis} &\left|\int \prod_{\iota\in\{0,1\}^{j+1}} \mathcal{C}^{|\iota|}  \Phi_{\iota}\ast\phi_{\iota}^{j+1}\star\phi_{i\iota}^{[r]}\ast\nu_{\iota}(x-iy;u)\,dx\,dy\,du\right|\\\leq&
\left(\prod_{\iota\in\{0,1\}^{j+1}}\|\Phi_{\iota}\|_{L^1}\|\psi_{\iota}^{[j]}\|_{L^1} \right)^{\half}\prod_{i=0}^1\left|\int \prod_{\iota\in\{0,1\}^j}
 |\Phi_{i\iota}|\ast\psi_{i\iota}^{[r+1]}\ast\tcal{\nu_{i\iota}}{\nu_{i\iota}}(x;u,y)\,dx\,dy\,du\right|^{\half}.\label{mit}
 \end{align}
\end{lemma}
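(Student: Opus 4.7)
The inequality is a Gowers-Cauchy-Schwarz decoupling that splits the $2^{j+1}$-fold product over $\iota \in \{0,1\}^{j+1}$ into two $2^j$-fold products indexed by $\iota' \in \{0,1\}^j$, each now containing $\tcal{\nu_{i\iota'}}{\nu_{i\iota'}}$ rather than the individual $\nu_{i\iota'}$. The plan is the standard two-step maneuver: apply Cauchy-Schwarz on the last coordinate to decouple, then invoke Lemma \ref{thm:aligs} to re-fold each resulting pairing into $\phi^{[r+1]} \ast \tcal{\nu}{\nu}$.

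First, I would decompose each $\iota \in \{0,1\}^{j+1}$ as $\iota = i\iota'$ with $i$ the last bit and $\iota' \in \{0,1\}^j$. Because the shift $x - iy$ equals $x$ when $i = 0$ and $x - y$ when $i = 1$, the integrand factors as $F_0(x, u)\,F_1(x - y, u)$, where each $F_i$ is the product over $\iota' \in \{0,1\}^j$ of the factor $\mathcal{C}^{|i\iota'|}[\Phi_{i\iota'} \ast \phi_{i\iota'}^{j+1} \star \phi_{i\iota'}^{[r]} \ast \nu_{i\iota'}](\,\cdot\,;u)$. Substituting $z = x - y$ decouples the two groups, so the integral becomes $\int \bigl(\int F_0(x, u)\,dx\bigr)\bigl(\int F_1(z, u)\,dz\bigr)\,du$, and Cauchy-Schwarz in $u$ bounds this by $\prod_{i=0}^1 \bigl(\int |\int F_i(x, u)\,dx|^2\,du\bigr)^{1/2}$.

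Next, for each $i$, I would expand $|\int F_i(x,u)\,dx|^2$ as a double integral in $(x_0, x_1)$ and substitute $y = x_0 - x_1$. This yields, for each $\iota' \in \{0,1\}^j$, a pairing $g_{i\iota'}(x, u)\,\overline{g_{i\iota'}(x - y, u)}$ with $g_{i\iota'} = \Phi_{i\iota'} \ast \phi_{i\iota'}^{j+1} \star \phi_{i\iota'}^{[r]} \ast \nu_{i\iota'}$. This is exactly the LHS of Lemma \ref{thm:aligs} applied to $f = \nu_{i\iota'}$ with the mollifier sequence $(\phi_{i\iota'}^0, \ldots, \phi_{i\iota'}^r, \phi_{i\iota'}^{j+1})$, which re-expresses the pairing as $\phi_{i\iota'}^{[r+1]} \ast \tcal{\nu_{i\iota'}}{\nu_{i\iota'}}$. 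To obtain the absolute-value mollifiers $\psi_{i\iota'}^{[r+1]}$ and the $L^1$ prefactor on the RHS, I would use the pointwise Jensen-type inequality $|A \ast h|^2 \leq \|A\|_{L^1}(|A| \ast |h|^2)$ (from Cauchy-Schwarz against the probability measure $|A(s)|\,ds/\|A\|_{L^1}$) applied to $A = \Phi_{i\iota'}$ and to $A = \phi_{i\iota'}^{j+1}$, trading these signed functions for their absolute values and producing exactly the claimed prefactor $\bigl(\prod_\iota \|\Phi_\iota\|_{L^1}\|\psi_\iota^{[j]}\|_{L^1}\bigr)^{1/2}$.

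The main obstacle will be the bookkeeping in this last step: verifying that the mollifier sequence produced by Lemma \ref{thm:aligs}, combined with the Jensen-type $L^1$ bounds, assembles into exactly $\psi_{i\iota'}^{[r+1]}$, and that the $L^1$ constants accumulate to the claimed product. The conventions $\psi_\iota^j = |\phi_\iota^j|$ and $\psi_\iota^{j+1} = \phi_\iota^j$ in the hypotheses are tailored precisely to make this accounting clean; the proof is then largely a matter of unwinding the recursive definition of $\phi^{[n]}$ together with the identity (\ref{notice}) relating $\phi^{n+1} \star \phi^{[n]}$ to the integral of $\phi^{[n+1]}$.
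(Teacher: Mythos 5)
Your proposal follows essentially the same route as the paper's proof: change variables to split the $2^{j+1}$-fold product into the $i=0$ and $i=1$ halves, apply Cauchy--Schwarz in the remaining coupled variables, expand the resulting square as a double integral in $(x_0,x_1)$ with $y=x_0-x_1$, and re-fold each pairing via Lemma \ref{thm:aligs}; your Jensen-type device $|A\ast h|^2\leq\|A\|_{L^1}(|A|\ast|h|^2)$ for trading signed kernels for their absolute values plays the role of the paper's replacement of the weight $\bm{p}$ by $\bm{p}'$ via the triangle inequality, and either mechanism yields the claimed $L^1$ prefactor. The one step you should not elide: Lemma \ref{thm:aligs} is stated for bounded functions, with $\tri f$ defined pointwise, so it cannot be applied directly to $f=\nu_{i\iota'}$ --- the pointwise product of two singular measures is meaningless, and $\tcal{\nu_{i\iota'}}{\nu_{i\iota'}}$ exists only as a weak$^*$ limit of mollified products. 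You must first insert an approximate identity, taking $g_{i\iota'}$ built from $\phi_n\ast\nu_{i\iota'}$ as the paper does, apply Lemma \ref{thm:aligs} to these bounded functions to obtain $\psi_{i\iota'}^{[r+1]}\ast\tri(\phi_n\ast\nu_{i\iota'})$, and only then pass to the limit $n\ra\infty$, using Corollary \ref{thm:factories} to identify that limit with the integral against $\tcal{\nu_{i\iota'}}{\nu_{i\iota'}}$.
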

\begin{proof}
Changing variables $x\mapsto x_0$, $x_0-y\mapsto x_1$, we have
  \begin{align}&(\ref{mis})^2=\notag{} \left|\int \prod_{i=0}^1\prod_{\iota\in\{0,1\}^{j}} \mathcal{C}^{|\iota|}  \Phi_{i\iota}\ast\phi_{i\iota}^{j+1}\star\psi_{i\iota}^{[r]}\ast\nu_{i\iota}(x_i;u)\,dx_0\,dx_1\,dx_1\,du\right|^2\\=&
  \lim_{n\ra\infty}\left|\int \prod_{i=0}^1\left[\bm{p}(\bm{t})\right]^{\half}\prod_{\iota\in\{0,1\}^{j}} g_{i}(x_i-T_{i\iota,0};u-T_{i\iota,1})\,dx_0\,dx_1\,du\right|^2
  \label{chocafe}  \end{align}
with $t_{i\iota} = (t_{i\iota}^0,t_{i\iota}^1)$, $t_{i\iota}^s=(t_{i\iota,0}^s,t_{i\iota,1}^s), s=0,1, \bm{t_i} =
(t_{i\iota})_{\iota\in\{0,1\}^j}$, $T_{i\iota,s}  = t_{i\iota,s}^{0}+t_{i\iota,s}^{1}$ 
\[\bm{p}(\bm{t})=\prod_{i=0}^1\prod_{\iota\in\{0,1\}^j} \Phi_{i\iota}(t_{i\iota}^0)\phi_{i\iota}^{[r+1]}(t_{i\iota,0}^1;t_{i\iota,1}^1),\]
and
\[g_{i\iota}(x;u) = \mathcal{C}^{|\iota|+i} \phi_{i\iota}\star\phi_n\ast\nu_{i\iota}(x;u).\]

Now we apply Cauchy-Schwarz
\begin{align}\label{mi}
 &(\ref{chocafe})\leq \lim_{n\ra\infty}\prod_{i=0}^1  
 \left(\int \left|\bm{p}(\bm{t})\right|\left[\prod_{s=0}^1\mathcal{C}^s\int \prod_{\iota\in\{0,1\}^{j}} g_{i\iota}(x_i-T_{i\iota,0};u-T_{i\iota,1})\,dx_s\right]\,du\,d\bm{t_i}\right).
\end{align}
Note that by the triangle inequality, we may replace $\bm{p}$ in the above by $\bm{p'}$ which is defined by replacing $\phi_{i\iota}^{[r+1]}$ in the definition of $\bm{p}$ by $\psi_{i\iota}^{[r+1]}$. 
We do so.
Changing variables $x_0\mapsto x$, $x_1\mapsto x - y$, integrating through by $\bm{t_i}$ as well as the now defunct $\bm{t_{i+1(\mod2)}}$ variable, and applying Lemma \ref{thm:aligs}, we have 
\begin{align}\notag{}
 &(\ref{mi})= \left(\prod_{\iota\in\{0,1\}^{j+1}}\|\Phi_{\iota}\|_{L^1} \||\psi_{\iota}|^{[j]}\|_{L^1} \right)\\\cdot&\label{m}
 \lim_{n\ra\infty}\prod_{i=0}^1   \left(
 \int \prod_{\iota\in\{0,1\}^{j}}|\Phi_{i\iota}|\ast\psi_{i\iota}^{[j+1]}\ast\tri(\phi_n\ast\nu_{i\iota})(x_i;u,y)\,dx\,dy\,du\,\right)
\end{align}

and using Lemma \ref{thm:factories} to take evaluate the limit we obtain
\[(\ref{m}) = (\ref{mit})^2.\]

\end{proof}

\begin{proof}[Proof of Corollary \ref{thm:phin}]
The proof follows directly from an induction using Lemma \ref{thm:phin1}.
\end{proof}

Before proving Proposition \ref{thm:GCS}, we must introduce more notation.

We set
\begin{align*}
&\Phi_{\iota_{>j}} :=  \Phi_{n_{\iota_{>j+1}}}^{(j)}\\
& \vec{\bm{n}}' = \left\{ n_{\iota_{>j}}\right\}_{0\leq j\leq k, \iota'\in\{0,1\}^k}\\
&\vec{\bm{n}} = \left\{ n_{\iota_{>j}}\right\}_{0\leq j\leq k+1, \iota\in\{0,1\}^{k+1}}
\end{align*}
and stipulate that $\lim_{\vec{\bm{n}}'\ra\infty}$ refers to each of the $2^k$ limits $\lim_{n_{\iota_{>j}}}$, $j\leq k$, taken in lexicographic order (and similarly for $\lim_{\vec{\bm{n}}\ra\infty}$).

For each $j\leq k+1$ and $\iota_{\leq j}\in\{0,1\}^j$,
we 
set
 \begin{align}\notag{}
  \bm{t^{j}} = \{t^{\iota_{>n}}\}_{1\leq n\leq j, \iota\in\{0,1\}^{k+1}}
 \end{align}
 define the variable $T^{\iota_{\leq j}}$ by

\begin{align}\label{T}
& T^{\iota_1}\equiv 0\\\notag{}&
 T^{\iota_{\leq j}} =  T^{\iota_{\leq j-1}} + t^{(\iota_{>j-1})}_0+\iota_{\leq j}\cdot t^{(\iota_{>j-1})}
 \end{align}
 and define
      
            \begin{align}\label{P}
           & \bm{\Phi^{(0)}} \equiv 1\\\notag{}&
\bm{\Phi^{(j)}}(\bm{t^j}) = \prod_{\iota_{\leq j}\in\{0,1\}^j} \Phi_{\iota_{>j}}(t^{(\iota_{>j})})\bm{\Phi^{(j-1)}}(\bm{t^{j-1}})
\end{align}

 Note that for $i=0,1$ and $j\in\N$
      
      \begin{align}\label{EXc1}
	\tri^j(\bm{\mu_i})=&w^{*}\hyphen\lim_{\vec{\bm{n}}} w^{\ast}\hyphen\lim_{\vec{\bm{m}}\ra\infty}\int \bm{\Phi^{(j)}}(\bm{t^{j}})	
	\prod_{\iota'\in\{0,1\}^{j}}\mathcal{C}^{|{\iota_{\leq j}}|}\Phi_{{\iota\leq j}}\ast\phi_{m_{{\iota'}}}
	\ast\mu_{{\iota_{\iota >j}}}(x-{\iota_{\leq j}}\cdot \bm{u} - T^{{\iota_{\leq j}}})\,d\bm{t}
      \end{align}

\begin{proof}[Proof of Proposition \ref{thm:GCS}]
 
 Since $(\phi_m^{\ast^{k+1}})$ is an approximate identity, by (\ref{EXc1}) we have that 
 
       \begin{align}\notag{}
 \tri^k(\bm{\mu}_{\iota_{k+1}})=&w^{\ast}\hyphen\lim_{\vec{\bm{n}}'\ra\infty} \lim_{\vec{\bm{m}}\ra\infty} \int \bm{\Phi^{(k)}}(\bm{t^{k}})
 \prod_{\iota_{\leq k}\in\{0,1\}^{k}}\mathcal{C}^{|\iota_{\leq k}|}\Phi_{\iota}\ast\phi_{m_{\iota}}^{\ast^{k+1}}\ast\mu_{\iota}(x-\iota\cdot \bm{u} - T^{\iota})\,d\bm{t}
      \end{align}

Thus
      \begin{align*}
  &\left|\langle\bm{\mu}\rangle\right|=\lim_{n\ra\infty} \left|\int \,\overline{\Phi_{n}\ast\tri^k(\bm{\mu}_{1})(x-u_{k+1};\bm{u}')}\,d\tri^k(\bm{\mu}_{0})(x;\bm{u}')\,du_{k+1}\right| \\=
   &\lim_{n_1\ra\infty}\lim_{n_0\ra\infty}\lim_{\vec{\bm{n}}'\ra\infty} \lim_{\vec{\bm{m}}\ra\infty}\left| \int \bm{\Phi^{(k+1)}}(\bm{t^{k+1}})	\prod_{\iota\in\{0,1\}^{k+1}}
   \mathcal{C}^{|\iota|}\Phi_{\iota}\ast\phi_{m_{\iota}}^{\ast^{k+1}}\ast\mu_{\iota}(x-\iota\cdot \bm{u} - T^{\iota})\,d\bm{t}\right|\\\leq&
  \lim_{\vec{\bm{n}}\ra\infty} \lim_{\vec{\bm{m}}\ra\infty}\left| \int \bm{\Phi^{(k+1)}}(\bm{t^{k+1}})	\prod_{\iota\in\{0,1\}^{k+1}}\mathcal{C}^{|\iota|}\Phi_{\iota}\ast\phi_{m_{\iota}}^{\ast^{k+1}}\ast\mu_{\iota}(x-\iota\cdot \bm{u} - T^{\iota})\,d\bm{t}\right|
   \end{align*}

      Set $d\nu_{\iota}(x) = d\mu_{\iota}(x-T^{\iota}-t^{(\iota)})$. 
      
      Now we  apply Corollary \ref{thm:phin} from Section \ref{ch:mollification}, obtaining that this is bounded by
      
                    \begin{align}\label{kthen}
 \limsup_{\vec{\bm{n}}\ra\infty}\lim_{\vec{\bm{m}}\ra\infty}\int  \bm{\Phi^{(k+1)}}(\bm{t^{k+1}})	\Phi_{\iota}(t^{(\iota)})  \prod_{\iota\in\{0,1\}^{k+1}}\left[\phi_{m_{\iota}}^{[k+1]}\ast\tri^{k+1}\nu_{\iota}(\T^{d(k+2)})\right]^{\frac{1}{2^{k+1}}}\,d\bm{t}
      \end{align}
      
      Of course, 
      \begin{align}\notag{}
       \prod_{\iota\in\{0,1\}^{k+1}}\left[\phi_{m_{\iota}}^{[k+1]}\ast\tri^{k+1}\nu_{\iota}(\T^{d(k+2)})\right]^{\frac{1}{2^{k+1}}} = \prod_{\iota\in\{0,1\}^{k+1}}\left[\phi_{m_{\iota}}^{[k+1]}\ast\tri^{k+1}\mu_{\iota}(\T^{d(k+2)})\right]^{\frac{1}{2^{k+1}}}
      \end{align}
since $\nu_{\iota}$ is a shift of $\mu_{\iota}$. And integrating over all the $t$'s in (\ref{kthen}) leaves us with 

\begin{align}\notag{}
 \lim_{\vec{\bm{m}}\ra\infty}\prod_{\iota\in\{0,1\}^{k+1}}\left[\phi_{m_{\iota}}^{[k+1]}\ast\tri^{k+1}\mu_{\iota}(\T^{d(k+2)})\right]^{\frac{1}{2^{k+1}}} = \prod_{\iota\in\{0,1\}^{k+1}}\|\mu\|_{U^{k+1}}
\end{align}
since the $\Phi_{\iota_{>j}}$'s all integrate out to $1$, completing the proof.
\end{proof}

\Ss[Uk is a norm]{$U^k$ is a norm}\label{ch:Norm}

We are now in a position to show $\|\cdot\|_{U^k}$ is indeed a norm. 

 Differing from the usual approach to showing that Gowers norms are norms, we do not need the Gowers-Cauchy-Schwarz inequality to show monotonicity of the $U^k$ norms, or that $\|\mu\|_{U^k}\leq \|\mu\|_{U^{k+1}}$, (and hence positivity of the norm),
 since we have the identity $\|\mu\|_{U^{k}}^{2^{k}}=\sum_{\bm{c}\in\Z^{k}}|\widehat{\tri^{k-1}\mu}(0;\bm{c})|^2\geq|\widehat{\tri^{k-1}\mu}(0;0)|^2=\|\mu\|_{U^{k-1}}^{2^k}$ from Proposition \ref{thm:gowersnorms}. 
But we will use Gowers-Cauchy-Schwarz type inequality to show that the $U^k$ norm satisfies the triangle inequality.

\begin{prop}
 $\|\cdot\|_{U^k}$ defines a norm on the space of measures $\mu$ on $\T^d$ for which $\|\mu\|_{U^k}<\infty$.
\end{prop}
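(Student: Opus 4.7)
The plan is to verify the three norm axioms — positivity, homogeneity, and the triangle inequality — with the first two essentially computational and the third being the substantive step, relying on the Gowers-Cauchy-Schwarz inequality (Proposition \ref{thm:GCS}).

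For positivity, the monotonicity $\|\mu\|_{U^{k-1}}\leq\|\mu\|_{U^k}$ noted just above the statement reduces the implication $\|\mu\|_{U^k}=0\Rightarrow\mu=0$ to the base case $k=2$. By Proposition \ref{thm:gowersnorms}, $\|\mu\|_{U^2}^4 = \sum_{c\in\Z^d}|\widehat{\tri^1\mu}(0;c)|^2$; a direct application of Proposition \ref{thm:fouriertransform} gives $\widehat{\tri^1\mu}(0;c)=|\hat\mu(c)|^2$, so vanishing of the sum forces $\hat\mu\equiv 0$, hence $\mu=0$. (Strictly one needs $k\geq 2$, since $\|\mu\|_{U^1}=|\hat\mu(0)|$ does not separate points.) Homogeneity follows from an easy induction giving $\tri^k(c\mu)=|c|^{2^k}\tri^k\mu$, which is immediate from $\tri^{k+1}\mu=\tcal{\tri^k\mu}{\tri^k\mu}$ and the fact that $\tcal{}{}$ is linear in the first argument and conjugate-linear in the second; substituting into the Fourier formula yields $\|c\mu\|_{U^{k+1}}=|c|\,\|\mu\|_{U^{k+1}}$.

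The substance is the triangle inequality. The key preliminary fact is that $\tri^k(\bm\mu)$, and hence $\langle\bm\mu\rangle$, is multilinear (with appropriate conjugations in half the slots) in the $2^{k+1}$ measures $\mu_\iota$; this follows inductively from the definition $\tri^{k+1}(\bm\mu)=\tcal{\tri^k(\bm{\mu_0})}{\tri^k(\bm{\mu_1})}$ combined with the sesquilinearity of $\tcal{}{}$. Writing $\bm\alpha$ for the tuple with every entry equal to $\mu+\nu$, multilinear expansion produces
\[\|\mu+\nu\|_{U^{k+1}}^{2^{k+1}} = \langle\bm\alpha\rangle = \sum_{J\subseteq\{0,1\}^{k+1}}\langle\bm\mu_J\rangle,\]
where $\bm\mu_J$ equals $\mu$ in slot $\iota\in J$ and $\nu$ in slot $\iota\notin J$. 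Proposition \ref{thm:GCS} bounds each term by $\|\mu\|_{U^{k+1}}^{|J|}\|\nu\|_{U^{k+1}}^{2^{k+1}-|J|}$, and summing by the binomial theorem gives $(\|\mu\|_{U^{k+1}}+\|\nu\|_{U^{k+1}})^{2^{k+1}}$. Taking $2^{k+1}$-th roots yields the triangle inequality and simultaneously shows $U^{k+1}$ is closed under addition.

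The hard part is really the preliminary multilinearity: one must verify that $\tri^k(\bm\mu)$ genuinely decomposes across its recursive definition, and that each intermediate quantity $\tri^j$ appearing in the expansion exists as a finite measure so that the inner product $\langle\bm\mu_J\rangle$ is well-defined. This last point requires bounding $\||\mu+\nu|\|_{U^{k+1}}$ by $\||\mu|\|_{U^{k+1}}+\||\nu|\|_{U^{k+1}}$, which can be handled by the same multilinear-expansion-plus-GCS argument applied to the absolute-value measures (in conjunction with the corollary that $\tri^{k+1}\mu$ exists iff $\||\mu|\|_{U^{k+1}}<\infty$). Once this foundational bookkeeping is in hand, the combination of GCS with the binomial identity mirrors the classical proof for Gowers norms on functions.
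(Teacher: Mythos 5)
Your argument is correct and shares its skeleton with the paper's: expand $\|\mu+\nu\|_{U^k}^{2^k}$ multilinearly into $2^{2^k}$ mixed terms, bound each by a Gowers--Cauchy--Schwarz inequality, and resum with the binomial theorem. The genuine difference is \emph{where} the expansion happens. You expand the abstract inner product $\langle\bm{\mu}\rangle$ and invoke Proposition \ref{thm:GCS}, which forces you to confront -- as you correctly flag -- the existence of the mixed objects $\tri^k(\bm{\mu}_J)$ and the validity of multilinearity for quantities defined as weak-$^*$ limits. The paper instead expands inside the mollified representation (\ref{EXc1}), where everything is an honest Lebesgue integral of products of convolutions and multilinearity is trivial, applies the mollified inequality of Corollary \ref{thm:phin} term by term, integrates out the auxiliary variables, and only passes to the limit at the end; no existence question for mixed $\tri^k$'s ever arises. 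Your proposed patch -- controlling $\||\mu+\nu|\|_{U^{k+1}}$ by running the same argument on absolute values -- has a wrinkle you should make explicit: since $|\mu+\nu|\neq|\mu|+|\nu|$ in general, you additionally need monotonicity of $\|\cdot\|_{U^{k+1}}$ on positive measures under $\rho_1\leq\rho_2$ (true, but it is an extra lemma the paper never states; the mollified route sidesteps it). On the other two axioms you supply more detail than the paper, which disposes of positivity with a bare citation of Proposition \ref{thm:gowersnorms}; your observation that $\|\mu\|_{U^1}=|\widehat{\mu}(0)|$ does not separate points, so the statement should be read for $k\geq 2$, is a correct caveat the paper omits.
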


\begin{proof}
Homogeneity of $\|\cdot\|_{U^k}$ is immediate, and that $\|\mu\|_{U^k} = 0$ only if $\mu=0$ follows from Proposition \ref{thm:gowersnorms}. So we are left only to check the triangle inequality.

To do this, let $\mu_1$ and $\mu_2$ be two measures in $U^k$. Then by the definition of the $U^k$ norm and (\ref{EXc1}), 

\begin{align}\label{x4}
 \|\mu_1+\mu_2\|_{U^k}^{2^k} = &\lim_{\vec{\bm{n}}'\ra\infty}
\int \int \bm{\Phi^{(k)}}(\bm{t^{k}})	\prod_{\iota\in\{0,1\}^{k}}\mathcal{C}^{|\iota|}\Phi_{\iota}\ast(\mu_1+\mu_2)(x-\iota\cdot \bm{u} - T^{\iota})\,dt\,dx\,d\bm{u}
 \end{align}

Since convolution is additive, we may write 
\begin{align}\label{x5}
(\ref{x4})=&\int \int \bm{\Phi^{(k)}}(\bm{t^{k}})	\prod_{\iota\in\{0,1\}^{k}}\mathcal{C}^{|\iota|}\Phi_{\iota}\ast(\mu_1+\mu_2)(x-\iota\cdot \bm{u} - T^{\iota})\,dt\,dx\,d\bm{u}\\\notag{}=&
\int \int \bm{\Phi^{(k)}}(\bm{t^{k}})	\prod_{\iota\in\{0,1\}^{k}}\mathcal{C}^{|\iota|}\left[\Phi_{\iota}\ast(\mu_1)(x-\iota\cdot \bm{u} - T^{\iota})+\Phi_{\iota}\ast(\mu_2)(x-\iota\cdot \bm{u} - T^{\iota})\right]\,dt\,dx\,d\bm{u}
 \end{align}
and expanding out the product over $\iota$, this is the same as
\begin{align}\label{x6}
 (\ref{x5})=&\int \int \bm{\Phi^{(k)}}(\bm{t^{k}}) \sum_{\bm{\mu}\in\{\mu_1,\mu_2\}^k} \prod_{\iota\in\{0,1\}^{k}}\mathcal{C}^{|\iota|}\left[\Phi_{\iota}\ast(\mu_{\iota})(x-\iota\cdot \bm{u} - T^{\iota})\right]\,dt\,dx\,d\bm{u}
\end{align}

By Dominated Convergence, we  have
\begin{align}\label{x7}
 (\ref{x6})=&\lim_{\vec{\bm{m}}\ra\infty}\int \int \bm{\Phi^{(k)}}(\bm{t^{k}}) \sum_{\bm{\mu}\in\{\mu_1,\mu_2\}^k} \prod_{\iota\in\{0,1\}^{k}}\mathcal{C}^{|\iota|}\left[\Phi_{\iota}\ast(\phi_{m_{\iota}}^{\ast^k}\ast\mu_{\iota})(x-\iota\cdot \bm{u} - T^{\iota})\right]\,dt\,dx\,d\bm{u}\\\notag{}\notag{}=&
 \lim_{\vec{\bm{m}}\ra\infty}\int \int \bm{\Phi^{(k)}}(\bm{t^{k}}) \sum_{\bm{\mu}\in\{\mu_1,\mu_2\}^k} \prod_{\iota\in\{0,1\}^{k}}\\\notag{}\notag{}&
 \mathcal{C}^{|\iota|}\left[\int \Phi_{\iota}(t^{(\iota)})(\phi_{m_{\iota}}^{\ast^k}\ast\mu_{\iota})(x-\iota\cdot \bm{u} - T^{\iota}-t^{(\iota)})\,dt^{(\iota)}\right]\,dt\,dx\,d\bm{u}
\end{align}
Letting $d\nu_{i}(x) = d\mu_{i}(x-T^{\iota}-t^{(\iota)})$ for $i=1,2$, and applying Fubini's Theorem, this is
\begin{align}\label{x8}
 (\ref{x7})=&\lim_{\vec{\bm{m}}\ra\infty}\int \bm{\Phi^{(k)}}(\bm{t^{k}})\left[\prod_{\iota\in\{0,1\}^k}\Phi_{\iota}(t^{(\iota)})\right] \\\notag{}\notag{}&
 \sum_{\bm{\nu}\in\{\nu_1,\nu_2\}^k} \int  \prod_{\iota\in\{0,1\}^{k}}\mathcal{C}^{|\iota|}(\phi_{m_{\iota}}^{\ast^k}\ast\nu_{\iota})(x-\iota\cdot \bm{u} )\,dx\,d\bm{u}\,d\bm{t}
\end{align}

We use Corollary \ref{thm:phin} from Section \ref{ch:mollification} to obtain the bound 
\begin{align}\label{x9}
 (\ref{x8})\leq&\lim_{\vec{\bm{m}}\ra\infty}\int \bm{\Phi^{(k)}}(\bm{t^{k}})\left[\prod_{\iota\in\{0,1\}^k}\Phi_{\iota}(t^{(\iota)})\right]  \\\notag{}\notag{}&
 \sum_{\bm{\nu}\in\{\nu_1,\nu_2\}^k} \prod_{\iota\in\{0,1\}^k}\mathcal{C}^{|\iota|}\left[\int \phi_{m_{\iota}}^{[k]}\ast\tri^k\nu_{\iota}(x;\bm{u})\,dx\,d\bm{u}\right]^{\frac{1}{2^k}}d\bm{t}
\end{align}

Since 
\begin{align}\notag{}
&\prod_{\iota\in\{0,1\}^k}\mathcal{C}^{|\iota|}\left[\int \phi_{m_{\iota}}^{[k]}\ast\tri^k\mu_{\iota}(x;\bm{u})\,dx\,d\bm{u}\right]^{\frac{1}{2^k}} = \prod_{\iota\in\{0,1\}^k}\left[\int \phi_{m_{\iota}}^{[k]}\ast\tri^k\nu_{\iota}(x;\bm{u})\,dx\,d\bm{u}\right]^{\frac{1}{2^k}}
 \end{align}
 and
 \begin{align}&\notag{}\int \bm{\Phi^{(k)}}(\bm{t^{k}})\left[\prod_{\iota\in\{0,1\}^k}\mathcal{C}^{|\iota|}\Phi_{\iota}(t^{(\iota)})\right]  \,d\bm{t} = 1
\end{align}
we have

\begin{align}\label{x10}
 (\ref{x9})=&\lim_{\vec{\bm{m}}\ra\infty}  \sum_{\bm{\mu}\in\{\mu_1,\mu_2\}^k} \prod_{\iota\in\{0,1\}^k}\left[\int \phi_{m_{\iota}}^{[k]}\ast\tri^k\mu_{\iota}(x;\bm{u})\,dx\,d\bm{u}\right]^{\frac{1}{2^k}}
\end{align}
or
\begin{align}\label{x11}
 (\ref{x10})=&  \sum_{\bm{\mu}\in\{\mu_1,\mu_2\}^k} \prod_{\iota\in\{0,1\}^k}\|\mu_{\iota}\|_{U^k}
\end{align}

This sum is the same as 
\begin{align}\label{x1}
 (\ref{x11})=&  \big[\|\mu_{1}\|_{U^k}+\|\mu_{2}\|_{U^k}\big]^{2^k}
\end{align}

Plugging this back into (\ref{x4}), we have shown that
\begin{align}\label{x2}
\|\mu_1+\mu_2\|_{U^k}\leq&  \|\mu_{1}\|_{U^k}+\|\mu_{2}\|_{U^k}
\end{align}
which is the triangle inequality.

Thus $\|\cdot\|_{U^k}$ is a norm.
\end{proof}

\bibliographystyle{plain}

\bibliography{biblio.bib}

%
%
%

\vskip0.5in

\noindent Marc Carnovale
\\ The Ohio State University \\ 231 W. 18th Ave\\ Columbus Oh, 43210 United States\\ {\em{Email: carnovale.2@osu.edu}}

\end{document}